\newtheorem{thm}{Theorem}[section]
\newtheorem{lemma}[thm]{Lemma}
\newtheorem{dfn}[thm]{Definition}
\newtheorem{cor}[thm]{Corollary}
\newtheorem{prop}[thm]{Proposition}
\newcommand{\set}[2]{\{\,\,#1\,\,|\,\,#2\,\}}
\newcommand{\sete}[1]{\{\,#1\,\}}
\newcommand{\cp}{\mathbb CP}
\newcommand{\rp}{\mathbb RP}
\newcommand{\z}{\mathbb Z}
\newcommand{\Ker}{\mathrm{Ker}}
\newcommand{\Ima}{\mathrm{Im}}
\newcommand{\strips}[2]{\widetilde{\mathcal M}_J(L_0,L_1:#1,#2)}
\newcommand{\astrips}{\widetilde{\mathcal M}_J(L_0,L_1)}
\newcommand{\ccstrips}[2]{\overline{\mathcal M}_J(L_0,L_1:#1,#2)}
\newcommand{\ccastrips}{\overline{\mathcal M}_J(L_0,L_1)}
\newcommand{\cstrips}[2]{{\mathcal M}_J(L_0,L_1:#1,#2)}
\newcommand{\castrips}{{\mathcal M}_J(L_0,L_1)}
\newcommand{\ccastripss}[3]{\overline{\mathcal M}_{#1}(#2,#3)}
\newcommand{\nov}{\Lambda_{\Gamma,\omega}}
\newcommand{\unov}{\Lambda_{\z_2}}
\begin{document}

\title[Floer homology of $(\rp^{2n-1},T^{2n-1})$]{Lagrangian Floer homology of the Clifford torus and real projective space in odd dimensions}
\author{Garrett Alston}
\date{November 22, 2008}
\address{Department of Mathematics, University of Wisconsin, Madison}
\email{alstom@math.wisc.edu}


\maketitle

\section{Introduction}
The Clifford torus and real projective space are both Lagrangian submanifolds of complex projective space.
The Clifford torus is 
$$T^k=\set{[z_0:\cdots:z_k]\in\cp^k}{|z_0|=|z_1|=\cdots=|z_k|},$$
and real projective space is
$$\rp^k=\set{[z_0:\cdots:z_k]\in\cp^k}{z_i\in\mathbb R}.$$
They intersect in the $2^k$ points $[\pm1:\cdots:\pm1]$.
Two interesting questions to ask are the following: Can they be disjoined from each other by Hamiltonian isotopy?
If not, what is the minimum number of points that they must intersect in?
In this article we use Lagrangian Floer homology to investigate these questions.
The main result is
\begin{thm}
$$HF(\rp^{2n-1},T^{2n-1}:\z_2)= (\z_2)^{2^n}.$$
\end{thm}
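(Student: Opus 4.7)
The plan is to set up the Floer complex directly on the $2^{2n-1}$ intersection points $[\pm 1:\cdots:\pm 1]$ and to compute the differential by relating Floer strips to holomorphic disks with boundary on $T^{2n-1}$ alone, exploiting the anti-symplectic involution $\tau:\cp^{2n-1}\to\cp^{2n-1}$ given by complex conjugation. First I would verify that the Floer homology is well-defined over $\z_2$. The minimal Maslov number of $T^{2n-1}$ is $2$ and of $\rp^{2n-1}$ is $2n$; over $\z_2$ the obstructions to $\partial^2=0$ coming from disk bubbling on each Lagrangian can be arranged to cancel.

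The central step is a Schwarz-reflection doubling construction. Since $\rp^{2n-1}=\mathrm{Fix}(\tau)$, the torus $T^{2n-1}$ is $\tau$-invariant, each intersection point is $\tau$-fixed, and the standard almost complex structure satisfies $\tau^*J_0=-J_0$, any $J_0$-holomorphic Floer strip $u:\mathbb R\times[0,1]\to\cp^{2n-1}$ extends to a $J_0$-holomorphic map $\tilde u:\mathbb R\times[-1,1]\to\cp^{2n-1}$ via $\tilde u(s,t)=\tau(u(s,-t))$ for $t\leq 0$. After reparametrization $\tilde u$ becomes a $J_0$-holomorphic disk with boundary on $T^{2n-1}$, with two boundary marked points at $p_\pm$ and an additional $\z_2$-symmetry whose fixed interior arc maps to $\rp^{2n-1}$. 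Index doubling sends a Floer strip of index $1$ to a Maslov-$2$ disk, so rigid Floer trajectories biject with $\tau$-invariant Maslov-$2$ disks.

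The latter can be enumerated using Cho's classification: every Maslov-$2$ disk with boundary on $T^{2n-1}$ is equivalent to a basic disk of the form $z\mapsto[c_0:\cdots:c_{i-1}:z:c_{i+1}:\cdots:c_{2n-1}]$ with $|c_j|=1$, and the $\tau$-invariance condition forces $c_j\in\{\pm1\}$. Each such disk meets $\rp^{2n-1}\cap T^{2n-1}$ in exactly two boundary points, at $z=\pm 1$, differing only in the $i$-th homogeneous coordinate. Up to projective equivalence and reparametrization this produces exactly one rigid strip between $p_-$ and $p_+$ whenever they differ in a single entry, giving the Floer differential $\partial[\epsilon]=\sum_{i=0}^{2n-1}[\epsilon+e_i]$ on the $\z_2$-vector space generated by classes in $\{0,1\}^{2n}/\{(1,\ldots,1)\}$. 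Picking representatives in $\{0,1\}^{2n-1}$ identifies this complex with the exterior algebra $\Lambda(y_1,\ldots,y_{2n-1})$ equipped with multiplication by $\sigma_2+\sigma_3+\cdots+\sigma_{2n-1}$, and a direct calculation yields $\dim_{\z_2}H=2^n$.

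The main obstacle will be transversality: one needs the moduli space of index-$1$ strips to be cut out regularly while $J$ remains $\tau$-invariant, so that the doubling argument applies uniformly. Either the standard structure $J_0$ is already regular on the relevant strata (which follows from the explicit parametrization of basic disks) or one must construct a $\tau$-equivariant perturbation; care is also needed to rule out contributions from non-basic or multiply covered Cho disks to the index-$1$ count, which should follow from dimension counts once monotonicity is used.
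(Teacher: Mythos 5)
Your geometric setup is essentially the paper's: reflect each strip across the $\rp^{2n-1}$ boundary via the anti-holomorphic involution, identify the double with a holomorphic disk on $T^{2n-1}$, invoke Cho's classification, and conclude that the rigid trajectories out of $[\epsilon_0:\cdots:\epsilon_{2n-1}]$ are exactly the $2n$ single-sign-flips, so that $\partial[\epsilon]=\sum_i[\epsilon+e_i]$. The parity count that makes $\partial^2=0$ work (each intersection point lies on exactly $2n$ Maslov-index-2 disk boundaries on $T^{2n-1}$, and on none for $\rp^{2n-1}$) is implicit in your enumeration but should be stated as the reason the bubbling contributions cancel, rather than something that ``can be arranged.'' The transversality you flag as an obstacle is resolved in the paper exactly as you guess: $J_0$ is already regular, because the linearized operator splits into one-dimensional Riemann--Hilbert problems of nonnegative Maslov index, which are surjective; so that worry is legitimate but surmountable by the route you name.

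The genuine gap is the last step. Your reformulation is correct and is a nice repackaging: after normalizing $\epsilon_0=1$ the complex is the group algebra $\z_2[\z_2^{2n-1}]\cong\Lambda(y_1,\ldots,y_{2n-1})$ with $y_i=1+g_i$, the differential is $\tilde\partial+\eta=\sum_i g_i+\prod_i g_i=(1+\sigma_1)+(1+\sigma_1+\cdots+\sigma_{2n-1})=\sigma_2+\cdots+\sigma_{2n-1}$, and one checks $(\sigma_2+\cdots+\sigma_{2n-1})^2=0$ since $2n-1$ is odd. But ``a direct calculation yields $\dim_{\z_2}H=2^n$'' is precisely the assertion to be proved, and it is not a one-liner: one must compute the rank of multiplication by $\sigma_2+\cdots+\sigma_{2n-1}$ on a $2^{2n-1}$-dimensional space and show it equals $2^{2n-2}-2^{n-1}$. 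The paper devotes its entire Section 4 to this, arguing by induction on $n$: it writes $\partial=\tilde\partial+\eta$, projects away two coordinates via a chain map $\pi:CF(N)\to CF(n)$, parametrizes $\pi^{-1}(\Ker\partial_n)$ explicitly, and identifies $\Ker\partial_N$ with the kernel of an auxiliary surjection $\alpha$, yielding $\dim HF(N)=2\dim HF(n)$. You would need to supply an argument of comparable substance (your exterior-algebra model would support an analogous induction on the number of variables two at a time, or a direct rank computation), and as written the proposal omits the part of the proof where the answer $2^n$ actually comes from.
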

The Floer chain group is generated by the intersection points of the submanifolds and the homology is invariant under Hamiltonian isotopy.
Therefore Theorem 1.1 immediately implies
\begin{thm}
$\rp^{2n-1}$ and $T^{2n-1}$ always intersect in at least $2^{n}$ points under Hamiltonian isotopy.
\end{thm}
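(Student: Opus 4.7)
The plan is to deduce Theorem 1.2 directly from Theorem 1.1, using only two standard facts about Lagrangian Floer theory: (i) when two Lagrangians meet transversally, the Floer chain group over $\z_2$ is the free $\z_2$-module on the intersection points, and (ii) $HF$ is invariant under Hamiltonian isotopy.

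Let $\phi \colon \cp^{2n-1} \to \cp^{2n-1}$ be any Hamiltonian diffeomorphism, and set $L = \phi(T^{2n-1})$, which is Hamiltonian isotopic to $T^{2n-1}$. First I would reduce to the transverse case: if $\rp^{2n-1}$ and $L$ fail to meet transversally, an arbitrarily small further Hamiltonian perturbation of $\phi$ makes them transverse (this is a generic condition), and a sufficiently small perturbation does not increase the cardinality of the intersection. So it suffices to bound $|\rp^{2n-1} \cap L|$ from below when the intersection is transverse.

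In the transverse case, the $\z_2$-dimension of $CF(\rp^{2n-1}, L : \z_2)$ equals $|\rp^{2n-1} \cap L|$, and the homology is a subquotient, so $\dim_{\z_2} HF(\rp^{2n-1}, L : \z_2) \le |\rp^{2n-1} \cap L|$. Hamiltonian invariance together with Theorem 1.1 gives $HF(\rp^{2n-1}, L : \z_2) \cong HF(\rp^{2n-1}, T^{2n-1} : \z_2) \cong (\z_2)^{2^n}$, and therefore $|\rp^{2n-1} \cap L| \ge 2^n$. There is no real obstacle to overcome here; the substantive work lies entirely in Theorem 1.1, together with the verification (carried out elsewhere in the paper, since both Lagrangians are monotone with suitable minimal Maslov number) that Floer homology and its Hamiltonian invariance are available in this setting.
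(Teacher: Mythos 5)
Your argument is correct and is exactly the paper's reasoning: the paper deduces Theorem 1.2 immediately from Theorem 1.1 via the observations that the Floer chain group is generated by the intersection points and that $HF$ is invariant under Hamiltonian isotopy. Your additional remarks on reducing to the transverse case and on the subquotient dimension bound just make explicit what the paper leaves implicit.
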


Theorem 1.1 is true only in odd dimensions because the Floer homology is not defined in even dimensions due to disk bubbling.
There is also bubbling in odd dimensions, but the disk contributions cancel each other out.
It is actually known that $T^k$ and $\rp^k$ cannot be displaced from each other for any $k$.
This has been proved, using different methods, in various other papers.
See \cite{ep}, \cite{bc1}, \cite{bc2}, and \cite{t}.

In \cite{oh3} Oh conjectures that $T^k$ is volume minimizing in its Hamiltonian deformation class.
Due to a Lagrangian Crofton's formula (see \cite{van},\cite{oh3}), this problem is related to intersections of $\phi(T^k)$ with $\xi\cdot\rp^k$, for $\phi$ a Hamiltonian diffeomorphism and $\xi\in U(k+1)$.
Theorem 1.2 does not answer the conjecture, but we do get
\begin{cor}
For any Hamiltonian diffeomorphism $\phi$, we have 
$$\frac{\textrm{vol}(\phi(T^{2n-1}))}{\textrm{vol}(\rp^{2n-1})}\geq \frac{2^n}{ 2n}.$$
\end{cor}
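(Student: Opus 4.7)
The plan is to reduce the volume inequality to the intersection-point count provided by Theorem 1.2 via the Lagrangian Crofton formula on $\cp^{2n-1}$, which is the central tool of \cite{van} and \cite{oh3}. That formula says that for any Lagrangian $L \subset \cp^k$,
\begin{equation*}
\textrm{vol}(L) \;=\; \frac{\textrm{vol}(\rp^k)}{k+1} \int_{U(k+1)} \#\bigl(L \cap \xi \cdot \rp^k\bigr)\, d\xi,
\end{equation*}
where $d\xi$ is the normalized Haar measure on $U(k+1)$; taking $k = 2n-1$ produces exactly the factor $1/(2n)$ that appears in the statement of the corollary.

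The next step is to apply Theorem 1.2 to each integrand. The $U(k+1)$-action on $\cp^k$ is Hamiltonian, because it preserves the Fubini--Study form and $\cp^k$ is simply connected, so every $\xi \cdot \rp^{2n-1}$ is Hamiltonian isotopic to $\rp^{2n-1}$. Since $\phi$ is also Hamiltonian, Theorem 1.2 applies to the pair $\bigl(\phi(T^{2n-1}),\xi \cdot \rp^{2n-1}\bigr)$ and yields
\begin{equation*}
\#\bigl(\phi(T^{2n-1}) \cap \xi \cdot \rp^{2n-1}\bigr) \;\geq\; 2^n
\end{equation*}
on the full-measure subset of $U(2n)$ on which the intersection is transverse.

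Integrating this pointwise lower bound against the probability Haar measure gives $\textrm{vol}(\phi(T^{2n-1})) \geq 2^n \cdot \textrm{vol}(\rp^{2n-1})/(2n)$, which rearranges to the claimed ratio inequality. The only non-Floer-theoretic input is the Crofton formula with its exact constant $1/(2n)$, which I would take directly from \cite{van} and \cite{oh3}; the symplectically nontrivial piece, the lower bound on the number of intersections of Hamiltonian translates of $T^{2n-1}$ with $\rp^{2n-1}$, is already encapsulated in Theorem 1.2, and I expect the only subtlety of the argument beyond citing the formula to be a brief check that generic unitary translates $\xi \cdot \rp^{2n-1}$ meet $\phi(T^{2n-1})$ transversally so that the intersection count in the integrand is honestly bounded below by Theorem 1.2 almost everywhere.
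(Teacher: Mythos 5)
Your proof is correct and follows essentially the same route as the paper: apply the Crofton formula of \cite{van} normalized against $\rp^{2n-1}$ (which is where the constant $\mathrm{vol}(\rp^k)/(k+1)$ comes from, since a generic unitary translate of $\rp^k$ meets $\rp^k$ in exactly $k+1$ points), then bound the integrand from below by $2^n$ using Theorem~1.2 and the fact that unitary translates of $\rp^{2n-1}$ are Hamiltonian isotopic to $\rp^{2n-1}$. The one small difference is that you make the Hamiltonian-isotopy justification for the $U(2n)$-action explicit (connectedness of $U(2n)$ plus $H^1(\cp^{2n-1})=0$) where the paper leaves it implicit; otherwise the argument is the same.
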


We briefly describe the organization of this article.
Section 2 contains an overview of Lagrangian Floer homology.
We clearly indicate the hypotheses that need to be checked for the Floer homology to be well-defined.
The verification of some of the hypotheses is relegated to Section 5.
Readers familiar with Floer homology may want to skip to Section 3, where we determine all the Floer trajectories and write a formula for the boundary operator.
Section 4 contains the computation of the homology.
In Section 6 we carry out the same calculation for coefficients in the Novikov ring $\unov$.
Finally in Section 7 we briefly discuss Corollary 1.3.

I would like to thank Yong-Geun Oh for many helpful discussions on this topic.
Much of the exposition in this article is based upon his papers and a series of classes he taught on Floer homology.
Also, it was through him and the unpublished notes of A. Ivanshina that I learned how to do the dimension 3 case.

\section{Lagrangian Floer Homology}
We briefly describe how Floer homology is constructed.
See \cite{fl1}, \cite{fl2}, \cite{fl3}, \cite{fl4}, \cite{oh1}, and \cite{rs} for the details.
We will use only $\z_2$ coefficients until Section 6, so until then $\z_2$ will be dropped from the notation.

Let $(M,\omega)$ be a compact symplectic manifold, $L_0$ and $L_1$ two closed Lagrangian submanifolds that intersect transversely, and $J$ a time-dependent almost complex structure compatible with $\omega$.
The Floer chain group $CF(L_0,L_1)$ is the $\z_2$ vector space generated (formally) by $L_0\cap L_1$.
A Floer trajectory---or ($J$-)holomorphic strip---is a map
$$u:\mathbb R\times[0,1]\to M$$
that satisfies
\begin{equation}\label{eq:dbar}
\left\{
\begin{array}{l}
\bar\partial_J u={\partial\over\partial s}u+J_t(u){\partial \over \partial t} u=0, \\
u(\cdot,0)\in L_0,\,u(\cdot,1)\in L_1, \\
u(-\infty,\cdot)\in L_0\cap L_1, u(+\infty,\cdot)\in L_0\cap L_1. \\
\end{array}
\right.
\end{equation}
($\mathbb R\times[0,1]$ is to be viewed as a subset of $\mathbb C$ with coordinates $s+it$.)
Solutions of $\bar\partial_Ju=0$ with top and bottom Lagrangian boundary conditions satisfy the final asymptotic condition in (\ref{eq:dbar}) if and only if the energy of $u$
$$E(u)={1\over 2}\int_{[0,1]\times\mathbb R}|\bar\partial_J u|^2$$
is finite.
The space of all holomorphic strips that run from $p\in L_0\cap L_1$ to $q\in L_0\cap L_1$ will be denoted $\strips{p}{q}$.
Let
$$\astrips=\bigcup_{p,q\in L_0\cap L_1}\strips{p}{q}.$$
If the linearization $D_u\bar\partial_J$ of $\bar\partial_J$ is surjective for every $u\in\astrips$ then each $\strips{p}{q}$ is a smooth manifold (different components may have different dimensions).
Let $\mathcal J^{reg}$ denote the set of all such $J$.
$\mathcal J^{reg}$ is a set of the second category, and from now on we assume $J\in\mathcal J^{reg}$.
If $u\in\strips{p}{q}$ then 
$$\dim(T_u\strips{p}{q})=\mathrm{Index}(D_u\bar\partial_J).$$
The index of $D_u\bar\partial_J$ is equal to the spectral flow of $\bar\partial_J$ along $u$, and this in turn is an invariant of the homotopy class of $u$ and is equal to $\mu(u)$, the Maslov index of $u$.

If $u\in\astrips$ then $u(\cdot+s_0,\cdot)$ is also in $\astrips$ for any $s_0$.
In other words, there is a natural $\mathbb R$ action on the space of holomorphic strips.
We define more spaces by modding out by the $\mathbb R$ action:
$$\cstrips{p}{q}=\strips{p}{q}/\mathbb R,$$
$$\castrips=\astrips/\mathbb R.$$
An isolated trajectories is trajectory $u$ such that the equivalence class $[u]$ is a $0$-dimensional component of $\castrips$.
When no confusion can arise we will not distinguish between $u$ and $[u]$.
Let $n(p,q)$ be the mod-$2$ number of isolated trajectories in $\strips{p}{q}$.
The boundary operator
$$\partial:CF(L_0,L_1)\to CF(L_0,L_1)$$
is defined by
$$\partial(p)=\sum_qn(p,q)\cdot q.$$

Under certain topological conditions on $M$, $L_0$, and $L_1$, Floer proved that $\partial^2=0$.
Therefore, the Floer homology group
$$HF(L_0,L_1)=\Ker(\partial)/\Ima(\partial)$$
is defined.
Moreover, he showed that $HF(L_0,L_1)$ does not depend upon the choice of $J\in\mathcal J^{reg}$, and also that
$$HF(\Phi_0(L_0),\Phi_1(L_1))=HF(L_0,L_1)$$
for any Hamiltonian diffeomorphisms $\Phi_0$ and $\Phi_1$.
The reason $\partial^2=0$ is that the moduli spaces $\castrips$ can be compactified by adding on broken trajectories.
$\partial^2$ counts the number of boundary components of the $1$-dimensional part of the compactified moduli space, and hence must be zero mod-$2$.
The compactified moduli spaces will be denoted $\ccstrips{p}{q}$ and $\ccastrips$.

The topological restrictions imposed by Floer do not hold in our present case ($L_0=\rp^k$, $L_1=T^k$).
However, the results of Oh in \cite{oh1} imply that the Floer homology is still defined if $k$ is odd.
We cite the relevant theorems after some more definitions.

Let $L$ be a compact Lagrangian submanifold.
Two homomorphisms
$$I_\mu:\pi_2(M,L)\to \mathbb Z,$$
$$I_\omega:\pi_2(M,L)\to \mathbb R$$
are defined as follows:
For each map $w:(D^2,\partial D^2)\to (M,L)$, $I_\mu(w)$ is defined to be the Maslov number of the bundle pair $(w^*TM,(w|\partial D^2)^*TL)$.
$I_\omega$ is defined by $I_\omega(w)=\int_{D^2} w^*\omega$.
$L$ is called monotone if there exists a constant $c>0$ such that $I_\omega=c\cdot I_\mu$.
The minimal Maslov number $\Sigma_L$ is defined to be the positive generator of $\Ima(I_\mu)\subset\mathbb Z$.

\begin{thm}[\cite{oh1} Theorems 4.4, 5.1]
Assume that $L_0$ and $L_1$ are monotone Lagrangian submanifolds.
Assume further that $\Sigma_{L_i}\geq 3$ for $i=0,1$ and $\mathrm{Im}(\pi_1(L_i))\subset \pi_1(M)$ is a torsion subgroup for at least one of $i=0,1$. 
Then there exists a dense subset $\mathcal J^\prime\subset \mathcal J^{reg}$ such that if $J\in\mathcal J^\prime$ then
\begin{enumerate}
\item $\partial$ is well-defined, 
\item $\partial^2=0$, and
\item $HF(L_0,L_1)$ is independent of $J$ and Hamiltonian isotopy.
\end{enumerate}
\end{thm}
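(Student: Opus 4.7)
The plan is to adapt Oh's program from \cite{oh1}, in which the topological restrictions Floer originally used are traded for the monotonicity and minimal-Maslov hypotheses of the theorem. Three ingredients are required: (i) generic transversality for strips and for bubbled configurations, (ii) Gromov compactness controlled by monotonicity, and (iii) a codimension-one boundary analysis of the $1$-dimensional moduli spaces that rules out disk bubbling when $\Sigma_L \geq 3$.

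First I would establish transversality by a Sard--Smale argument applied to the universal $\bar\partial$-operator over the space of admissible time-dependent $J$, both for strips and for pseudo-holomorphic disks with one marked boundary point mapping to $L_0$ or $L_1$. This yields a dense $\mathcal J' \subset \mathcal J^{reg}$ on which every stratum of Maslov index $\leq 2$ in the compactification is regular. Monotonicity $I_\omega = c \cdot I_\mu$ then turns a Maslov-index bound into an energy bound, and Gromov compactness makes each $\ccstrips{p}{q}$ a compact space whose boundary strata are broken strips decorated with disk and sphere bubbles. The torsion hypothesis on $\mathrm{Im}(\pi_1(L_i)) \subset \pi_1(M)$ enters here: combined with the homotopy exact sequence of the pair $(M, L_i)$ and the monotonicity constant, it forces the set of homotopy classes of Maslov-$1$ strips from $p$ to $q$ to be finite, so the sum $\partial(p) = \sum_q n(p,q)\cdot q$ is finite and (1) follows.

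For (2), the main step, I would study the components of $\ccastrips$ with $\mu = 2$; after quotienting by $\mathbb R$ these are $1$-dimensional. The codimension-one boundary always contains strip breakings $u_1 \# u_2$ with $\mu(u_i) = 1$, which contribute exactly the matrix coefficients of $\partial^2$. The step I expect to be the principal obstacle is ruling out every other boundary stratum: a standard dimension count in the monotone setting shows that the only bubbling appearing in codimension one is a disk of Maslov index exactly $2$, and under $\Sigma_{L_i} \geq 3$ no such disks exist. Sphere bubbles are eliminated by the usual codimension-two count. Standard gluing then pairs the broken-strip boundary components, yielding $\partial^2 \equiv 0 \pmod 2$. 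Finally, (3) follows from Floer's continuation-map argument applied to a path of almost complex structures (or of Lagrangian data), with the same bubble-exclusion performed parametrically along the path; the hypotheses $\Sigma_{L_i} \geq 3$ and the torsion condition are tailored precisely so that both the boundary count above and its parametric version succeed.
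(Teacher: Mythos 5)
Your outline matches both the paper's own discussion following Theorem 2.1 and Oh's original program in \cite{oh1} point for point: Sard--Smale transversality for a dense $\mathcal J'$, monotonicity converting a Maslov-index bound into an energy bound, Gromov compactness, $\Sigma_{L_i}\geq 3$ killing codimension-one disk bubbles, a codimension-two count for sphere bubbles, and gluing together with a parametric continuation argument. One mechanism is slightly mischaracterized: the torsion hypothesis is not used to directly bound the number of homotopy classes of Maslov-$1$ strips, but rather enters (via \cite{oh1} Proposition 2.7, which the paper cites explicitly) to establish monotonicity of the \emph{pair} $(L_0,L_1)$---that $I_\omega$ is proportional to $I_\mu$ on $\pi_1(\Omega(L_0,L_1))$---so that all Maslov-$1$ strips carry a common energy, and finiteness of $n(p,q)$ then follows from Gromov compactness with this uniform energy bound rather than from any a priori topological finiteness.
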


Statements 1 and 2 are equivalent to compactness of the 0- and 1-dimensional parts of $\ccastrips$, respectively.
The proofs of the theorems in \cite{oh1} show that $\mathcal J^\prime$ is the set of $J\in\mathcal J^{reg}$ for which the compactness holds.
Actually, because only the 0- and 1-dimensional components of $\castrips$ are used to define the Floer homology, it is not necessary for $J$ to be in $\mathcal J^{reg}$, but only that $D_u\bar\partial_J$ be surjective for those $u$ with Maslov index 1 or 2.

We state some well-know facts about $\rp^k$ and $T^k$; the proofs can be found in \cite{oh1}, \cite{oh2}, and \cite{cho1}.
$\rp^k$ and $T^k$ are both monotone Lagrangian submanifolds.
The minimal Maslov number of $\rp^k$ is $k+1$.
The minimal Maslov number of $T^k$ is 2 for all $k$.
$\rp^1$ is Hamiltonian isotopic to $T^1$, and $HF(\rp^1,\rp^1)=(\z_2)^2$.
Hence we may assume $k\geq 3$, and so the minimal Maslov number of $\rp^k$ is at least 4.

Theorem 2.1, therefore, does not directly apply because $T^k$ has Maslov index 2 disks.
To analyze the problem caused by Maslov index 2 disks we need the following theorem about Gromov-Floer compactness (see for example \cite{fl1}).
\begin{thm}
Let $u_n\in\cstrips{p}{q}$ be a sequence of holomorphic strips with constant Maslov index $I$ and energy $E(u_n)$ bounded above.
Then there exists a subsequence converging to the cusp-curve $u_\infty=(\underline u,\underline v,\underline w)$, where $\underline u=(u_1',\ldots,u_i')$ is an $(i-1)$-broken trajectory connecting $p$ to $q$, $\underline v=(v_1,\ldots,v_j)$ is a collection of holomorphic sphere bubbles, and $\underline w=(w_1,\ldots,w_k)$ is a collection of holomorphic disk bubbles with boundary lying entirely on $L_0$ or entirely on $L_1$.
Furthermore,
\begin{equation}\label{eq:conv}
I=\sum \mathrm\mu(u_\alpha')+2\sum c_1(v_\beta^*TM)+\sum I_\mu(w_\gamma).
\end{equation}
$c_1$ denotes the first Chern class and $\mu$ the Maslov index.
\end{thm}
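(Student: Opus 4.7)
The plan is to follow the standard Gromov--Floer compactness scheme adapted to $J$-holomorphic strips with Lagrangian boundary. The first ingredient is an $\epsilon$-regularity lemma: there exists $\hbar>0$ such that any $J$-holomorphic map from a ball (or a half-disk with boundary on $L_0$ or $L_1$) with energy below $\hbar$ satisfies a uniform $C^\infty$ bound on a smaller ball, obtained from the mean value inequality applied to $|du|^2$. Combined with the a priori energy bound and standard elliptic bootstrapping of the Cauchy--Riemann equation, this lets me pass to a subsequence with $u_n \to u_\infty^{(0)}$ in $C^\infty_{loc}$ off a finite bubbling set $Z\subset \mathbb R\times[0,1]$ consisting of points where $|du_n|$ concentrates. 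By Floer's removable singularity theorem the limit extends smoothly across $Z$ to a $J$-holomorphic strip.

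At each $z\in Z$ I rescale: pick $z_n\to z$ with $c_n:=|du_n(z_n)|\to\infty$ and set $\tilde u_n(w)=u_n(z_n+w/c_n)$. If $z$ is interior, the rescaled maps converge in $C^\infty_{loc}$ to a nonconstant $J$-holomorphic sphere $v_\beta$; if $z$ lies on $\{t=0\}$ or $\{t=1\}$, they converge to a nonconstant $J$-holomorphic disk $w_\gamma$ with boundary entirely on $L_0$ or $L_1$ respectively. Because each bubble absorbs at least $\hbar$ of energy, only finitely many bubbles can form. I iterate this rescaling on the neck annuli around each bubbling point to build the full bubble tree and, crucially, to verify the \emph{no energy loss} identity $E(u_n)\to E(u_\infty^{(0)})+\sum E(v_\beta)+\sum E(w_\gamma)+\cdots$; this uses an annular isoperimetric inequality of Hofer type.

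Separately, because the strip domain $\mathbb R\times[0,1]$ admits the noncompact $\mathbb R$-action by translations, the sequence may split into several strip components. Whenever $u_n$ concentrates a fixed amount of energy on a translated region $s\in[s_n,s_n+T_n]$ with $s_n\to\pm\infty$, I reparametrize by $s\mapsto s-s_n$ and extract further $J$-holomorphic strip limits $u_1',\ldots,u_i'$ connecting intermediate intersection points $p=x_0,x_1,\ldots,x_i=q\in L_0\cap L_1$, again applying removable singularities at the $\pm\infty$ ends. Because the symplectic action takes only finitely many values on $L_0\cap L_1$, the process terminates and yields the broken trajectory $\underline u$ together with the bubbles $\underline v$, $\underline w$.

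Finally, the identity (\ref{eq:conv}) is a consequence of the additivity of the Maslov index under gluing along the bubble tree: the homotopy class $[u_n]$ is eventually the gluing of the classes of the $u_\alpha'$ together with the sphere and disk bubbles attached at the nodes, with $\mu$ of a sphere bubble equal to $2c_1(v_\beta^*TM)$ and $\mu$ of a disk bubble equal to $I_\mu(w_\gamma)$. The main obstacle in making all this rigorous is the inductive bubble-tree construction coupled with the annular no-energy-loss estimate, since one must rule out energy draining into infinitely thin necks connecting bubbles or strip components; once that is in hand the index formula follows essentially formally from the additivity of $\mu$ under the combinatorial gluing.
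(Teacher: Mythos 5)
The paper does not actually prove this theorem: it is quoted as a known Gromov--Floer compactness result with a pointer to Floer \cite{fl1}, so there is no in-text argument to compare yours against. Judged on its own terms, your sketch follows the standard compactness scheme correctly: $\epsilon$-regularity and $C^\infty_{\mathrm{loc}}$ convergence off a finite concentration set, rescaling at concentration points to extract sphere and disk bubbles, the bubble-tree/no-energy-loss analysis on the connecting necks, extraction of broken strip components by $\mathbb R$-translation at the $\pm\infty$ ends, and the index identity (\ref{eq:conv}) from additivity of $\mu$, $c_1$, and $I_\mu$ under gluing once one knows that no piece of the homotopy class escaped into the necks.

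Two refinements worth flagging, though neither is a genuine gap. First, termination of the strip-breaking process should be argued by energy quantization---every nonconstant $J$-holomorphic strip, sphere, or disk carries energy at least $\hbar>0$, so the a priori energy bound limits the number of components---rather than by finiteness of the set of action values on $L_0\cap L_1$. In the present non-exact, monotone setting the action is only well-defined on a cover of the path space, so a nonconstant strip can return to the same intersection point with positive energy, and finiteness of $L_0\cap L_1$ does not by itself bound the number of breakings. Second, the sphere-versus-disk dichotomy at a concentration point is not decided by the location of $z$ alone: an interior $z$ always yields a sphere, but a $z$ on $\{t=0\}$ or $\{t=1\}$ yields a sphere bubble when $c_n\cdot\mathrm{dist}(z_n,\partial(\mathbb R\times[0,1]))\to\infty$ and a disk bubble only when that quantity stays bounded. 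With these small adjustments, your outline is a correct account of the standard proof that the paper is citing.
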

Note that we abused notation and said $u_n\in\castrips$.
Technically, $u_n\in\astrips$, and the convergence only holds up to $\mathbb R$ translation.
That is, assuming we have already replaced the original sequence with a convergent subsequence, the convergence means that there exists sequences $s_n^\alpha\in\mathbb R$, $1\leq\alpha\leq i$, such that $u_n(\cdot+s_n^\alpha,\cdot)$ converges to $u_\alpha'$.
The convergence is $C^\infty$-uniform on every compact subset $K\subset\mathbb R\times[0,1]$ that does not contain any points where disk and sphere bubbles attach to $u_\alpha'$.

Now suppose $u_n\in\cstrips{p}{q}$ is a sequence with $\mu(u_n)=1$ for all $n$.
Under the assumptions of Theorem 2.1 it is true that all $u_n$ have the same energy (\cite{oh1} Proposition 2.7).
Theorem 2.2 thus applies and gives a convergent subsequence.
Monotonicity and regularity imply all the terms on the right-hand side of (\ref{eq:conv}) are positive.
The assumption $\Sigma_{L_i}\geq 3$ then implies that $u_\infty$ consists of a single trajectory.
That is, a subsequence of $u_n$ converges to an element of $\cstrips{p}{q}$.
Hence the 0-dimensional part of $\castrips$ is compact---so $\partial$ is well-defined.
The same argument works if it is only true that $\Sigma_{L_i}\geq 2$.

Consider now compactness of the 1-dimensional part of $\castrips$.
Let $u_n\in\cstrips{p}{q}$ be a sequnce with $\mu(u_n)=2$.
If $\Sigma_{L_i}\geq 3$ then the same argument works.
More precisely, the 1-dimensional part of $\castrips$ is compact up to splittings into 1-broken trajectories, with each 1-broken trajectory consisting of two isolated trajectories.
Therefore the boundary components that are added to the 1-dimensional part of the compactified moduli space $\ccastrips$ consist entirely of 1-broken trajectories---it follows that $\partial^2=0$.

If we only assume $\Sigma_{L_i}\geq 2$ then the situtation is more complicated.
If $p\neq q$ a similar argument works.
If $p=q$ two things can happen:
\begin{enumerate}
\item $\underline u=\emptyset$, $\underline v=(v_1)$ with $c_1(v_1)=1$, and $\underline w=\emptyset$,
\item $\underline u=\emptyset$, $\underline v=\emptyset$, and $\underline w=(w_1)$ with $\mu(w_1)=2$.
\end{enumerate}

In case 1, a sphere bubble $v_1$ appears at the point $p$.
A dimension counting argument shows that for a dense subset of $\mathcal J^{reg}$ the moduli space of holomorphic spheres with one marked point misses the 0-dimensional submanifold $L_0\cap L_1$ under the evaluation map.
Hence for these $J$ case 1 cannot occur.

Case 2 cannot be avoided.
In the addendum to \cite{oh1} it is noted that each holomorphic disk $w:(D^2,\partial D^2)\to(M,L_i)$ can be viewed as a boundary component of some 1-dimensional component of $\castrips$, as long as the linearization $D_w\bar\partial_J$ is surjective and the evaluation map on the moduli space of disks with one marked point is transverse to $L_0\cap L_1$.
In particular, $\partial^2=0$ no longer follows from the compactness of the 1-dimensional part of $\ccastrips$, because the boundary components contain elements besides broken trajectories.
However, we do have
$$<\partial^2(p),p>=\Phi_0(p)+\Phi_1(p),$$ 
where $\Phi_i(p)$ is the number mod-$2$ of Maslov index 2 disks with boundary on $L_i$ that pass through $p$.
Therefore, if $\Phi_0(p)+\Phi_1(p)= 0$ then $\partial^2=0$.

In summary, $HF(\rp^k,T^k)$ is well-defined, and $J$ can be used to calculate it, if and only if the following items hold:
\begin{enumerate}
\item No $J$-holomorphic spheres with $c_1=1$ pass through $L_0\cap L_1$.
\item The evaluation map on the moduli space of $J$-holomorphic disks with one marked point, Maslov index 2, and boundary lying on $T^k$ is transverse to $\rp^k\cap T^k $.
\item If $w:(D^2,\partial D^2)\to(\cp^k,T^k)$ is such a disk then $D_w\bar\partial_{J}$ is surjective.
\item $D_u\bar\partial_{J}$ is surjective for all $J$-holomorphic strips of Maslov index 1 or 2.
\item $\Phi_{\rp^k}(p)$+$\Phi_{T^k}(p)=0$ for all $p\in\rp^k\cap T^k$.
\end{enumerate}
Items 1-4 are true for all $k$.
For item 5, if $k\geq2$, then $\Phi_{\rp^k}(p)=0$ because the minimal Maslov number of $\rp^k$ is $k+1$.
Theorem 3.1 shows that $\Phi_{T^k}(p)=k+1$.
Thus $\Phi_{\rp^k}(p)+\Phi_{T^k}(p)=k+1$.
This is $0$ if and only if $k$ is odd.
(In the $k=1$ case, we actually have $\rp^1=T^1$, so $\Phi_{\rp^1}(p)+\Phi_{T^1}(p)=4=0$.)
Therefore, item 5 holds if and only if $k$ is odd.
\begin{prop}
$HF(\rp^{2n-1},T^{2n-1})$ is well-defined and the standard complex structure $J_0$ on $\cp^{2n-1}$ can be used to calculate it.
\end{prop}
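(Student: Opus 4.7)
The plan is to verify items (1)--(4) from the list preceding the statement, taking $J = J_0$ and $k = 2n-1$; item (5) has already been established for odd $k$ in the discussion above.

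For item (1), every $J_0$-holomorphic sphere in $\cp^{2n-1}$ has first Chern number a non-negative multiple of $2n$, since $c_1(T\cp^{2n-1}) = 2nH$ with $H$ the hyperplane class. In particular, no $J_0$-sphere has $c_1 = 1$, so the condition is vacuously satisfied.

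For items (2) and (3), I would appeal to Cho's explicit classification of Maslov index $2$ holomorphic disks with boundary on $T^{2n-1}$ (\cite{cho1}). Up to reparametrization and the $T^{2n-1}$-action, there are $2n$ such disks, each totally geodesic. For each such $w$, the pulled-back bundle pair $(w^*T\cp^{2n-1},(w|_{\partial D^2})^*TT^{2n-1})$ splits holomorphically as a direct sum of rank-one sub-pairs whose Maslov indices sum to $2$. Automatic regularity for rank-one Cauchy-Riemann operators on the disk then yields surjectivity of $D_w\bar\partial_{J_0}$, giving item (3). Transitivity of the torus action on $T^{2n-1}$ forces the boundary evaluation map to be a submersion onto $T^{2n-1}$, hence transverse to the discrete set $\rp^{2n-1}\cap T^{2n-1}$, giving item (2).

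For item (4), I would defer to Section 3, where all $J_0$-holomorphic strips are classified explicitly. For each trajectory $u$ of Maslov index $1$ or $2$, one exhibits a holomorphic splitting of the bundle pair along $u$ into rank-one Cauchy-Riemann operators compatible with the mixed boundary conditions (values on $\rp^{2n-1}$ at $t = 0$ and on $T^{2n-1}$ at $t=1$), and again invokes automatic regularity in rank one.

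The main obstacle is item (4): unlike the disk case, the strip carries Lagrangian boundary conditions on two different submanifolds, so the trivialization implementing the splitting must be chosen to respect both simultaneously. This forces reliance on the concrete description of trajectories in Section 3, together with the fact that both $\rp^{2n-1}$ and $T^{2n-1}$ sit inside $\cp^{2n-1}$ in a manner compatible with the standard complex structure, so that the line-subbundle decomposition can in fact be made along the entire strip.
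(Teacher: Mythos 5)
Your proposal is correct and follows essentially the same route as the paper: the paper likewise reduces items (2)--(4) to the splitting of the linearized operator into rank-one Cauchy--Riemann summands of nonnegative Maslov index plus automatic regularity in rank one (its Section 5, where the mixed $\rp^{2n-1}$/$T^{2n-1}$ boundary case you flag is handled by Schwarz reflection to a Riemann--Hilbert problem on the disk), and deduces transversality of $ev$ from Cho's classification. The only cosmetic difference is item (1), where you argue directly from $c_1(T\cp^{2n-1})=2nH$ rather than citing Oh's Lemma 4.6, which amounts to the same computation.
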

\begin{proof}
(1) follows from Lemma 4.6 in \cite{oh2}.
(2), (3), and (4) will be proved in Section 5.
(5) was discussed prior to the proof.
\end{proof}

Henceforth we will use only $J=J_0$.

\section{Classification of Discs and Floer Trajectories}

The minimal Maslov number of $T^k$ is 2.
A consequence, as explained in Section 2, is that some of the boundaries of the 1-dimensional components of the compactified moduli space $\ccastripss{J}{\rp^k}{T^k}$ will consist of Maslov index 2 holomorphic disks with boundary lying on $T^k$.
For the Floer homology to be defined the number of such disks must be even.
The next theorem, due to Cho, classifies all holomorphic disks with boundary lying on $T^k$.
\begin{thm}[\cite{cho1} Theorem 10.1]
Let $w\colon (D^2,\partial D^2)\to(\mathbb CP^k,T^k)$ be a holomorphic map.
Then $w$ can be written as $w(z)=[w_0(z):\ldots:w_k(z)]$, where each $w_i$ is a finite Blaschke product.
That is,
$$w_i(z)=e^{\sqrt{-1}\theta_i}\prod_{j=1}^{\mu_i}{z-\alpha_{i,j}\over1-\bar\alpha_{i,j}z},$$
with $\theta_i\in\mathbb R$, $\mu_i\in\mathbb Z_{\geq0}$, $\alpha_{i,j}\in\mathrm{Int}(D^2)$, and $\cap_{i=0}^k\cup_{j=1}^{\mu_i}\alpha_{i,j}=\emptyset$.
Furthermore, the Maslov index of $w$ is
$$\mu(w)=2\sum_{i=0}^k\mu_i.$$
\end{thm}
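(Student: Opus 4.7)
The plan is to reduce the classification to classical Blaschke factorization by gauging away the common boundary modulus of the homogeneous coordinates. Since $D^2$ is contractible, the pullback bundle $w^*\mathcal{O}(-1)$ is holomorphically trivial, so $w$ lifts to a holomorphic map $\tilde w=(w_0,\ldots,w_k)\colon D^2\to\mathbb C^{k+1}$. After dividing by any common holomorphic factor vanishing at shared zeros, I may assume $\tilde w$ has no common zero in $\bar D^2$. The boundary condition $w(\partial D^2)\subset T^k$ then reads $|w_0(z)|=\cdots=|w_k(z)|$ for $z\in S^1$. In particular no $w_i$ can vanish on $S^1$---otherwise all would, contradicting the no-common-zero assumption---so the common modulus $r\colon S^1\to\mathbb R_{>0}$ is strictly positive and continuous.

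Next I would construct a non-vanishing holomorphic function $h\colon D^2\to\mathbb C^*$ extending continuously to $\bar D^2$ with $|h|=r$ on $S^1$: take the Poisson extension $u$ of $\log r$, let $v$ be its harmonic conjugate, and set $h=e^{u+iv}$. Replacing $\tilde w$ by $\tilde w/h$ is permissible since this corresponds only to a change of trivialization of $w^*\mathcal{O}(-1)$. The new components $w_i$ are holomorphic on $D^2$, continuous on $\bar D^2$, and satisfy $|w_i|=1$ on $S^1$. Each $w_i$ has only finitely many zeros in $D^2$, since they cannot accumulate in the interior by isolation and cannot accumulate on $S^1$ by continuity combined with $|w_i|=1$ there. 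The classical Blaschke factorization theorem then identifies $w_i$ as a constant of modulus one times a finite Blaschke product $\prod_j(z-\alpha_{i,j})/(1-\bar\alpha_{i,j}z)$: divide out the Blaschke product with the same zeros, apply the maximum modulus principle to the holomorphic quotient (which has $|{\cdot}|=1$ on $S^1$ and no zeros), and conclude it is constant. The no-common-zero condition on $\tilde w$ transfers directly to $\cap_i\cup_j\alpha_{i,j}=\emptyset$.

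For the Maslov index I would exploit the Euler sequence on $\cp^k$ to identify $w^*T\cp^k$, after adding a trivial complex line, with $\bigoplus_{i=0}^k w^*\mathcal{O}(1)$. Under this splitting, the totally real subbundle $(w|_{\partial D^2})^*TT^k$, augmented by the trivial real line, becomes the direct sum of the unit-circle real subbundles cut out by the Clifford circles in each $\mathcal{O}(1)$ factor, and the boundary map in the $i$th factor is precisely the Blaschke product $w_i$ of degree $\mu_i$. Each line factor contributes $2\mu_i$ to the Maslov index of the bundle pair, yielding $\mu(w)=2\sum_{i=0}^k\mu_i$.

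The main obstacle is the Maslov bookkeeping: producing a global symplectic trivialization of $w^*T\cp^k$ compatible with the Clifford-torus boundary condition and confirming that the auxiliary trivial summand contributes nothing. The classification of the components themselves is a direct application of the maximum principle once the boundary modulus has been normalized away by $h$.
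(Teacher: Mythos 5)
Note first that the paper itself does not prove this theorem; it is imported from Cho~\cite{cho1}, Theorem 10.1, so there is no in-paper proof to compare against and your proposal has to stand on its own. The overall strategy---lift $w$ to a nowhere-vanishing map into $\mathbb C^{k+1}$, normalize the common boundary modulus by an outer function, invoke the classical finite Blaschke characterization, and compute the Maslov index through the Euler sequence---is a reasonable route. But there is a genuine gap at the very first step. Triviality of $w^*\mathcal O(-1)$ over the \emph{open} disk gives a nowhere-vanishing holomorphic lift $\tilde w\colon D^2\to\mathbb C^{k+1}\setminus\{0\}$, but such a lift need not extend continuously to $\overline{D^2}$: any two lifts differ by multiplication by a zero-free holomorphic $h\colon D^2\to\mathbb C^*$, and choosing $h(z)=\exp\bigl(1/(1-z)\bigr)$ produces a lift that blows up at $1\in S^1$. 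Everything downstream in your argument---the positive continuous modulus $r$ on $S^1$, the outer function built from $\log r$, and especially the finiteness of the zeros of each $w_i$ via continuity of $|w_i|$ at $S^1$---silently presupposes the lift is continuous on the closed disk. This is not a cosmetic omission: a bounded holomorphic function with $|f|=1$ almost everywhere on $S^1$ need not be a finite Blaschke product (singular inner functions), so boundary continuity is exactly the hypothesis the Blaschke step requires and the one you have not secured.

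The gap is fillable, but the finiteness of zeros should come first, not after. Since $T^k$ avoids every coordinate hyperplane, each $Z_i:=w^{-1}(\{z_i=0\})$ is a closed subset of $\overline{D^2}$ disjoint from $\partial D^2$ and discrete in $D^2$, hence finite. Near $\partial D^2$ write $w=[1:g_1:\cdots:g_k]$; each $g_i$ is then a meromorphic function on $D^2$, continuous on $\overline{D^2}$ and unimodular on $S^1$, with poles contained in $Z_0$ and zeros contained in $Z_i$. Taking $B_i$ to be the finite Blaschke product with zero set $Z_i$ (with multiplicities), the quotient $g_iB_0/B_i$ is holomorphic, zero-free, continuous on $\overline{D^2}$ and unimodular on $S^1$, hence a unimodular constant by the maximum principle applied to it and its reciprocal; this yields $w=[B_0:c_1B_1:\cdots:c_kB_k]$ with no global holomorphic trivialization up to the boundary ever needed. (An alternative, which I believe is closer to Cho's own argument, is to Schwarz-reflect $w$ across the antiholomorphic involution $[z_0:\cdots:z_k]\mapsto[1/\bar z_0:\cdots:1/\bar z_k]$ fixing $T^k$ to a holomorphic sphere $\cp^1\to\cp^k$, whose components are then polynomials.) Your Euler-sequence bookkeeping for $\mu(w)$ is plausible in outline, but you should spell out that the trivial summand $\mathcal O$ in $0\to\mathcal O\to\mathcal O(1)^{\oplus(k+1)}\to T\cp^k\to 0$ carries the constant real boundary condition (Maslov index $0$) and that the Clifford-circle boundary condition in each $\mathcal O(1)$ summand contributes exactly $2\mu_i$; those identifications carry the content of that half of the theorem.
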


Let $\mathcal M_1(T^k,2)$ denote the moduli space of holomorphic disks $w:(D^2,\partial D^2)\to(\cp^k,T^k)$ with Maslov index 2 and one marked point.
An immediate consequence of Theorem 3.1 is that the evaluation map
$$ev:\mathcal M_1(T^k,2)\to T^k$$
is a diffeomorphism.
Therefore,
$$\#(ev^{-1}(\rp^k\cap T^k))=\#(\rp^k\cap T^k)$$
and the number of Maslov index 2 disks is even if and only if $k$ is odd.

Theorem 3.1 allows us to determine all Floer trajectories as well.
Let $u$ be a trajectory, so $u$ satisfies (\ref{eq:dbar}) with $L_0=\rp^k$, $L_1=T^k$.
Using the Schwarz reflection principle, $u$ can be reflected about $\rp^k$ to obtain a map
$$\tilde u:\mathbb R\times[-1,1]\to \cp^k$$
with the properties
\begin{itemize}
\item $\tilde u|\mathbb R\times[0,1]=u$,
\item the energy of $\tilde u$ is twice that of $u$, and
\item both the top and bottom boundaries of $\tilde u$ lie on $T^k$.
\end{itemize}
$\mathbb R\times[0,1]$ is conformally equivalent to $D^2\setminus\{-1,1\}$, so the domain of $\tilde u$ can be thought of as $D^2\setminus\{-1,1\}$.
By the removable singularities theorem, $\tilde u$ extends to a holomorphic map $\tilde u:(D^2,\partial D^2)\to(\cp^k,T^k)$.
The Maslov index of $\tilde u$ (thinking of $\tilde u$ as a disc) is twice that of $u$.
Because $\tilde{u}(\mathbb R)\subset\rp^k$, Theorem 3.1 implies that $\tilde u$ is of the form
\begin{equation}\label{eq:class}
z\mapsto[\pm 1\prod_{i=0}^{j_0}\phi_{0,i}(z):\cdots:\pm1\prod_{i=0}^{j_k}\phi_{k,i}(z)]
\end{equation}
with $j_i\in\mathbb Z_{\geq 0}$, $\sum j_i=\mu(u)$, and $\phi_{l,i}\in\mathrm{Aut}(D^2\setminus\{-1,1\})$.
(This is straightforward to verify in case $\mu(u)=1,2$, which are the only cases we need.)

Therefore, identifying $\mathbb R\times[0,1]$ with $D^2\setminus\{-1,1\}$, we have shown that every holomorphic strip is the top half of a holomorphic disk of the form (\ref{eq:class}).
Note that under this identification of domains the $\mathbb R$ translation on $\mathbb R\times[0,1]$ corresponds to the action of $\mathrm{Aut}(D^2\setminus\{-1,1\})=\mathbb R$ on the top half of $D^2\setminus\{-1,1\}$.
In particular we have proved:

\begin{prop}
Let $p=[\epsilon_0:\cdots:\epsilon_k]\in\rp^k\cap T^k$.
There are exactly $k+1$ isolated holomorphic strips that start at $p$.
They are the top halves of the disks
$$w_0:z\mapsto[-\epsilon_0z:\epsilon_1:\cdots:\epsilon_k],$$
$$\vdots$$
$$w_k:z\mapsto[\epsilon_0:\cdots:\epsilon_{k-1}:-\epsilon_kz].$$
\end{prop}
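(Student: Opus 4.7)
The plan is to read off the isolated strips directly from Theorem 3.1 via the Schwarz-reflection correspondence developed just before the statement. The first step is to translate the problem into one about Maslov index $2$ disks: an isolated strip $u$ is a $0$-dimensional point of $\castrips$, so the dimension formula $\dim\cstrips{p}{q}=\mu(u)-1$ forces $\mu(u)=1$, and Schwarz reflection across $\rp^k$ then doubles the Maslov index, so the associated disk $\tilde u\colon(D^2,\partial D^2)\to(\cp^k,T^k)$ satisfies $\mu(\tilde u)=2$.

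Next, I apply the classification (\ref{eq:class}). The constraint $\sum j_i=\mu(u)=1$ forces exactly one index, say $l$, to have $j_l=1$ while every other $j_i$ vanishes. Hence
\[
\tilde u(z)=[c_0:\cdots:c_l\,\phi(z):\cdots:c_k]
\]
for signs $c_i\in\{\pm1\}$ and a single automorphism $\phi\in\mathrm{Aut}(D^2\setminus\{-1,1\})$. Such $\phi$ are necessarily of the real form $\phi(z)=(z-\alpha)/(1-\alpha z)$ for some $\alpha\in(-1,1)$; in particular $\phi(-1)=-1$ and $\phi(1)=1$.

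The final step is to pin down the sign constants from the starting-point condition and to quotient out the remaining freedom by the $\mathbb R$-reparametrization. Arranging the conformal identification so that the puncture $z=-1$ corresponds to $s=-\infty$, the condition $\tilde u(-1)=p$ becomes $[c_0:\cdots:-c_l:\cdots:c_k]=[\epsilon_0:\cdots:\epsilon_k]$ in $\cp^k$. Up to the overall projective sign this forces $c_i=\epsilon_i$ for $i\neq l$ and $c_l=-\epsilon_l$. The remaining parameter $\alpha$ sweeps out precisely the $\mathbb R$-reparametrization orbit of the strip, so selecting the canonical representative $\alpha=0$ (equivalently $\phi(z)=z$) recovers exactly the disk $w_l$ of the statement. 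Letting $l$ range over $\{0,\ldots,k\}$ produces $k+1$ manifestly distinct isolated strips out of $p$, and the derivation shows these are all of them.

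The only real care required is bookkeeping—tracking the overall projective sign so that it does not accidentally double-count, and verifying that the puncture $z=-1$ (rather than $z=+1$) corresponds to $s=-\infty$. Both points follow immediately from the reflection construction already set up, so I do not anticipate a substantive obstacle.
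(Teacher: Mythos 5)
Your proposal is correct and follows essentially the same route as the paper: Schwarz reflection to a Maslov index $2$ disk on $T^k$, Theorem 3.1 to classify that disk as having a single degree-one Blaschke factor, and identification of the remaining automorphism parameter with the $\mathbb{R}$-reparametrization of the strip. The paper leaves the index bookkeeping, the sign-pinning from $\tilde u(-1)=p$, and the explicit form of $\mathrm{Aut}(D^2\setminus\{-1,1\})$ implicit, while you spell them out, but the argument is the same.
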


\begin{prop}
Every holomorphic strip of Maslov index two is the upper half of a disk of the form
$$z\mapsto[\pm1:\cdots:\pm\phi_0(z):\cdots:\pm\phi_1(z):\cdots:\pm1],$$
with $\phi_0,\phi_1\in\mathrm{Aut}(D^2\setminus\{-1,1\})$.
($\phi_0(z)$ and $\phi_1(z)$ can both occur in the same homogeneous coordinate, in which case the coordinate is meant to be $\phi_0(z)\phi_1(z)$.)
\end{prop}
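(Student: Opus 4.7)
The plan is to derive this as a direct specialization of the general classification (\ref{eq:class}) to Maslov index $2$. The heavy lifting is already in place: Schwarz reflection across $\rp^k$ doubles any strip $u$ to a holomorphic disk $\tilde u:(D^2,\partial D^2)\to(\cp^k,T^k)$ of Maslov index $2\mu(u)$, and Cho's Theorem 3.1 together with the reflection symmetry $\tilde u(\bar z)=\sigma(\tilde u(z))$, where $\sigma$ is the complex-conjugation involution of $\cp^k$ fixing $\rp^k$, forces $\tilde u$ into the form (\ref{eq:class}) with $\sum_{i=0}^k j_i=\mu(u)$ and each $\phi_{l,i}\in\mathrm{Aut}(D^2\setminus\{-1,1\})$.

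With $\mu(u)=2$, the constraint $\sum j_i=2$ admits exactly two distributions of indices. Either a single $j_l$ equals $2$, in which case the $l$-th homogeneous coordinate of $\tilde u$ is $\pm\phi_0(z)\phi_1(z)$ and every other coordinate is $\pm 1$; or else two distinct indices $l_0,l_1$ have $j_{l_0}=j_{l_1}=1$, in which case the $l_0$-th coordinate is $\pm\phi_0(z)$, the $l_1$-th is $\pm\phi_1(z)$, and every other coordinate is $\pm 1$. Both cases match the form in the statement, with the parenthetical clarification in the proposition covering the first case where the two factors share a coordinate.

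The only substantive point to verify is the assertion attached to (\ref{eq:class}) that for low Maslov index each factor of $\tilde u$ actually lies in $\mathrm{Aut}(D^2\setminus\{-1,1\})$, i.e.\ is a M\"obius automorphism of $D^2$ preserving $\{-1,1\}$, rather than an arbitrary Blaschke factor. I expect this to be the only real obstacle in going from (\ref{eq:class}) to Proposition 3.3: one writes out the degree-$\leq 2$ Blaschke products produced by Theorem 3.1 in each coordinate and imposes the relation $w_i(\bar z)=\lambda(z)\overline{w_i(z)}$ (the translation of $\tilde u(\bar z)=\sigma(\tilde u(z))$ into coordinates, modulo the projective ambiguity $\lambda$ that is common to all $i$). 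A short algebraic computation then forces each individual Blaschke factor to be real on $\mathbb R\cap D^2$, and therefore an automorphism of $D^2\setminus\{-1,1\}$, which is what is needed to conclude the proposition.
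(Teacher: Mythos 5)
Your overall route — Schwarz-reflect the strip, invoke Cho's classification, and read off the constraint $\sum j_i=2$ — is exactly the paper's, and the two-case split (one coordinate of degree $2$, or two coordinates of degree $1$) is what the paper intends. Where you overreach is in the claim that the reflection relation $w_i(\bar z)=\lambda\,\overline{w_i(z)}$ forces \emph{each individual} Blaschke factor to be real on $\mathbb R\cap D^2$. In the sub-case where both factors sit in the same homogeneous coordinate, the constraint on $w_l(z)=e^{i\theta}\tfrac{(z-\alpha)(z-\beta)}{(1-\bar\alpha z)(1-\bar\beta z)}$ only forces the zero set $\{\alpha,\beta\}$ to be stable under complex conjugation; the possibility $\beta=\bar\alpha$ with $\alpha$ non-real survives. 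Then $\tfrac{(z-\alpha)(z-\bar\alpha)}{(1-\bar\alpha z)(1-\alpha z)}$ is real on $\mathbb R$, fixes $\pm 1$, and gives a perfectly good reflection-symmetric Maslov index $4$ disk (hence a Maslov index $2$ strip), yet neither factor $\tfrac{z-\alpha}{1-\bar\alpha z}$ is real on $\mathbb R$ nor in $\mathrm{Aut}(D^2\setminus\{-1,1\})$, and no regrouping repairs this since a degree-one Blaschke map preserving $\{-1,1\}$ must have a real zero. This imprecision is in fact latent in the paper's own remark that the factorization into $\mathrm{Aut}(D^2\setminus\{-1,1\})$ elements is ``straightforward to verify'' for $\mu=2$; it is harmless for the paper's purposes (Lemma 5.3 only needs that the strip maps into a totally geodesic $\cp^1$ or $\cp^1\times\cp^1$ so that the linearization splits into nonnegative-index one-dimensional pieces), but you should not assert that each individual factor is forced to be real — only the product within a given coordinate is.
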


Using Propostion 3.2, we can now write down a formula for the boundary operator $\partial:CF(\rp^k,T^k)\to CF(\rp^k,T^k)$.
The formula is
\begin{equation}\label{eq:boundary}
\partial([\epsilon_0:\cdots:\epsilon_k])=\sum_{i=0}^k[\epsilon_0:\cdots:-\epsilon_i:\cdots:\epsilon_k].
\end{equation}
For example, if $k=3$, the basis of $CF(\rp^3,T^3)$ is ordered as
$$[1:1:1:1],[-1:1:1:1],[1:-1:1:1],[1:1:-1:1],[1:1:1:-1],$$
$$[-1:-1:1:1],[-1:1:-1:1],[-1:1:1:-1],$$
and the elements of $CF(\rp^3,T^3)$ are thought of as column vectors, then
\begin{equation}\label{eq:three}
\partial=
\left[
\begin{array}{cccccccc}
0 & 1 & 1 & 1 & 1 & 0 & 0 & 0 \\
1 & 0 & 0 & 0 & 0 & 1 & 1 & 1 \\
1 & 0 & 0 & 0 & 0 & 1 & 1 & 1 \\
1 & 0 & 0 & 0 & 0 & 1 & 1 & 1 \\
1 & 0 & 0 & 0 & 0 & 1 & 1 & 1 \\
0 & 1 & 1 & 1 & 1 & 0 & 0 & 0 \\
0 & 1 & 1 & 1 & 1 & 0 & 0 & 0 \\
0 & 1 & 1 & 1 & 1 & 0 & 0 & 0 \\
\end{array}
\right]
.
\end{equation}

\section{Calculation of $HF(\rp^{2n-1},T^{2n-1})$}
We are now ready to prove Theorem 1.1.
As mentioned before, $\rp^1$ and $T^1$ are Hamiltonian isotopic to each other in $\cp^1$, and it is known (see \cite{oh2}) that $HF(\rp^1,\rp^1)=(\z_2)^2$.
The next case is dimension 3.
\begin{lemma}
$$HF(\rp^3,T^3)=(\z_2)^4.$$
\end{lemma}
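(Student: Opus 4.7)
The plan is to read $\partial$ off as the explicit $8\times 8$ matrix displayed in (\ref{eq:three}), which is just the formula (\ref{eq:boundary}) specialized to $k=3$, and then do a short piece of linear algebra over $\z_2$. Concretely, I would first verify the matrix by picking one or two basis elements and applying (\ref{eq:boundary}) — the only subtlety being that $[\epsilon_0:\epsilon_1:\epsilon_2:\epsilon_3] = [-\epsilon_0:-\epsilon_1:-\epsilon_2:-\epsilon_3]$, so after flipping a single $\epsilon_i$ one may need to multiply through by $-1$ to land on the chosen representative in the basis list. For instance, $\partial([1:-1:1:1]) = [-1:-1:1:1]+[1:1:1:1]+[1:-1:-1:1]+[1:-1:1:-1]$, and the last two summands must be rewritten as $[-1:1:1:-1]$ and $[-1:1:-1:1]$ to match column 3 of (\ref{eq:three}).

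The key observation is that the rows of (\ref{eq:three}) take only two distinct values: rows $1, 6, 7, 8$ all equal $(0,1,1,1,1,0,0,0)$, and rows $2, 3, 4, 5$ all equal $(1,0,0,0,0,1,1,1)$. These two vectors are manifestly $\z_2$-linearly independent, so $\mathrm{rank}(\partial)=2$, giving $\dim \Ima(\partial)=2$ and $\dim \Ker(\partial)=6$. Proposition 2.3 already ensures $\partial^2=0$, so $\Ima(\partial)\subset \Ker(\partial)$ automatically, and hence
$$HF(\rp^3,T^3)=\Ker(\partial)/\Ima(\partial)=(\z_2)^{6-2}=(\z_2)^4.$$

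Conceptually, the block structure reflects a $\z_2$-grading on $CF(\rp^3,T^3)$: since $k+1=4$ is even, the parity of the number of $-1$'s in a representative is well-defined, splitting the basis into a parity-even class (elements $1, 6, 7, 8$ in the ordering used for (\ref{eq:three})) and a parity-odd class (elements $2$--$5$), and formula (\ref{eq:boundary}) flips one coordinate and therefore reverses this grading, producing the off-diagonal form of the matrix. There is no substantive obstacle in the argument; the only real work is the representative bookkeeping needed to justify (\ref{eq:three}), after which the rank computation and conclusion are immediate.
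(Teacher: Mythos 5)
Your proof is correct and follows the same route as the paper: read off the matrix of $\partial$ from (\ref{eq:three}), observe that its rank over $\z_2$ is $2$, and conclude $\dim HF = 8 - 2\cdot 2 = 4$. The extra verification of the matrix entries and the remark on the parity grading are sound but not logically needed beyond what the paper already records.
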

\begin{proof}
$\z_2$ is a field, so
$$\dim(HF)=\dim(\Ker(\partial))-\dim(\Ima(\partial))=\dim(CF)-2\dim(\Ima(\partial)).$$
From (\ref{eq:three}), we see that $\dim(\Ima(\partial))=2$.
Therefore, $\dim(HF)=2^3-4=4$.
\end{proof}

We prove the general case using induction.
Assume that
$$HF(\rp^{2n-1},T^{2n-1})=(\z_2)^{2^{n}},$$
and $n\geq 2$.
We then need to prove the result for $N=n+1$.
Let $CF(k)=CF(\rp^{2k-1},T^{2k-1})$, $\partial_k=\partial:CF(k)\to CF(k)$, and $HF(k)=HF(\rp^{2k-1},T^{2k-1})$.
Every point of $\rp^{2k-1}\cap T^{2k-1}$ can be written uniquely as
$$[1:\pm1:\pm1:\cdots:\pm1].$$
Let $(\pm1,\pm1,\cdots,\pm1)$ denote such a point.
Then a basis for $CF(k)$ is the set of all points $\sete{(\pm1,\cdots,\pm1)}$; elements of $CF(k)$ are formal sums of these points.

If $x\in CF(n)$ then $(1,1,x)$, $(-1,1,x)$, etc.\ will be used to denote elements in $CF(N)$.
For example, if 
$$x=(-1,1,1)\in CF(2)$$ 
then 
$$(1,1,x)=(1,1,-1,1,1);$$ 
if 
$$x=(1,1,1)+(-1,-1,1)$$ 
then 
$$(-1,1,x)=(-1,1,1,1,1)+(-1,1,-1,-1,1).$$

Let $\eta=\eta_k,\tilde\partial=\tilde\partial_k:CF(k)\to CF(k)$ be the maps
$$\eta(\epsilon_1,\cdots,\epsilon_k)=(-\epsilon_1,\cdots,-\epsilon_k),$$
$$\tilde\partial(\epsilon_1,\cdots,\epsilon_k)=\sum_{i=1}^k(\epsilon_1,\cdots,-\epsilon_i,\cdots,\epsilon_k).$$
From (\ref{eq:boundary}) it follows that $\partial_k=\tilde\partial_k+\eta_k$. 
Let $\pi:CF(N)\to CF(n)$ be the map that removes the first two coordinates, that is
$$\pi:(\epsilon_1,\epsilon_2,\epsilon_3,\cdots,\epsilon_N)\mapsto(\epsilon_3,\cdots,\epsilon_N).$$
$\pi$ is clearly surjective.
\begin{lemma}
$\partial_n\circ\pi=\pi\circ\partial_N.$
\end{lemma}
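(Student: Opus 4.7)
The plan is to verify the identity pointwise on a basis element $p=(\epsilon_1,\dots,\epsilon_{2N-1})\in CF(N)$ and extend by $\z_2$-linearity. Using the decomposition $\partial_k=\tilde\partial_k+\eta_k$ introduced just before the statement, it suffices to show separately that $\pi\circ\tilde\partial_N=\tilde\partial_n\circ\pi$ and $\pi\circ\eta_N=\eta_n\circ\pi$.

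The second identity is immediate: $\eta_N$ flips every sign and $\eta_n$ flips every sign, so flipping signs and then discarding the first two coordinates yields the same result as first discarding and then flipping.

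For the first identity, write
\begin{equation*}
\tilde\partial_N(p)=\sum_{i=1}^{2N-1}(\epsilon_1,\dots,-\epsilon_i,\dots,\epsilon_{2N-1}).
\end{equation*}
When I apply $\pi$ term by term, the $i=1$ and $i=2$ summands both map to $\pi(p)=(\epsilon_3,\dots,\epsilon_{2N-1})$, because $\pi$ forgets exactly the coordinates that these two terms modify. Over $\z_2$ these two contributions cancel. For $i\geq 3$, $\pi$ just relabels and the resulting sum is precisely $\tilde\partial_n(\pi(p))$. Combining the two pieces gives $\pi\circ\partial_N=\partial_n\circ\pi$ on every basis element.

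There is no real obstacle here; the only point that needs noting is that the cancellation of the two forgotten-coordinate terms relies crucially on working with $\z_2$ coefficients, which is exactly the setting of the present section. The same argument would fail over $\z$, and this will be an issue to revisit when the Novikov-ring computation is carried out in Section 6.
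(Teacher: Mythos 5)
Your proof is correct and follows essentially the same route as the paper's: both arguments reduce to noting that $\partial_N=\tilde\partial_N+\eta_N$, that $\eta$ obviously commutes with $\pi$, and that the two terms of $\tilde\partial_N$ flipping the forgotten coordinates each project to $\pi(p)$ and cancel mod $2$. (Your closing caveat about Section 6 is harmless but unnecessary: the Novikov-ring chain complex there still has $\z_2$ as its underlying coefficient ring, and the inductive lemma is not re-used in that section.)
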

\begin{proof}
For $x\in CF(n)$, we calculate
$$\pi\circ\partial_N(\epsilon_1,\epsilon_2,x)=\pi\bigl((-\epsilon_1,\epsilon_2,x)+(\epsilon_1,-\epsilon_2,x)+(\epsilon_1,\epsilon_2,\tilde\partial_nx)+(-\epsilon_1,-\epsilon_2,\eta_nx)\bigr)=$$
$$x+x+\tilde\partial_nx+\eta_nx=\tilde\partial_nx+\eta_nx=\partial_nx=\partial_n\circ\pi(\epsilon_1,\epsilon_2,x).$$
\end{proof}

It follows that $\Ker(\partial_N)\subset \pi^{-1}(\Ker(\partial_n))$.
\begin{lemma}
Let $x\in\pi^{-1}(\Ker(\partial_n))$.
Then $x$ can be written uniquely as
\begin{equation}\label{eq:form}
\begin{array}{c}
x=\Bigl[(1,1,u)+(-1,-1,u)+(-1,1,v)+(1,-1,v)+\\
(1,1,w)+(-1,1,w)\Bigr]+(1,1,t)
\end{array}
\end{equation}
with $u,v,w\in CF(n)$, $t\in\Ker(\partial_n)$.
Moreover, $\partial_N(x)=0$ if and only if 
\begin{equation}\label{eq:relations}
\left\{
\begin{array}{l}
\partial_nv=w+t+\eta w,\\
\partial_nu=w+\eta t+\eta w,\\
\partial_nw=0.
\end{array}
\right.
\end{equation}
\end{lemma}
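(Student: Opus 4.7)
The plan is to exploit the splitting of $CF(N)$ induced by the first two coordinates. Every element can be written uniquely as $x=(1,1,a)+(-1,1,b)+(1,-1,c)+(-1,-1,d)$ with $a,b,c,d\in CF(n)$, and then $\pi(x)=a+b+c+d$. Matching this against the ansatz (\ref{eq:form}) yields the linear system $d=u$, $c=v$, $b=v+w$, $a=u+w+t$, which inverts uniquely over $\z_2$ to $u=d$, $v=c$, $w=b+c$, $t=a+b+c+d$. In particular $t=\pi(x)$, so the hypothesis $\pi(x)\in\Ker(\partial_n)$ is exactly the condition $t\in\Ker(\partial_n)$, and the decomposition (\ref{eq:form}) exists uniquely for each $x\in\pi^{-1}(\Ker(\partial_n))$.

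Next I would compute $\partial_N(x)$ term by term, using the identity from the proof of Lemma 4.2, namely
$$\partial_N(\epsilon_1,\epsilon_2,y)=(-\epsilon_1,\epsilon_2,y)+(\epsilon_1,-\epsilon_2,y)+(\epsilon_1,\epsilon_2,\tilde\partial_n y)+(-\epsilon_1,-\epsilon_2,\eta_n y).$$
Applying this to each of the seven summands of $x$ and grouping the result by its own first-two-coordinate sign pattern gives four $CF(n)$-valued coefficients. Doubled contributions from $u$ and $v$ vanish mod $2$, and $\tilde\partial_n+\eta_n=\partial_n$ lets one rewrite the answers in terms of $\partial_n$ together with the involution $\eta_n$.

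The final step is to read off the relations. In the order $(1,1),(-1,-1),(-1,1),(1,-1)$ the four coefficients come out to
$$\partial_n u+w+\tilde\partial_n w+\tilde\partial_n t,\quad \partial_n u+w+\eta_n w+\eta_n t,\quad \partial_n v+w+\tilde\partial_n w+t,\quad \partial_n v+w+\eta_n w+t.$$
Adding the first two (equivalently, the last two) gives $\tilde\partial_n w+\eta_n w=\partial_n w$, so vanishing of all four forces $\partial_n w=0$. Once this is known, $\tilde\partial_n w=\eta_n w$; and since $t=\pi(x)$ already satisfies $\partial_n t=0$ we also have $\tilde\partial_n t=\eta_n t$. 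The four equations then collapse pairwise to $\partial_n u=w+\eta_n w+\eta_n t$ and $\partial_n v=w+\eta_n w+t$, which are precisely the relations (\ref{eq:relations}).

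I do not expect any real obstacle here: the only delicate choice is the seven-term form (\ref{eq:form}) itself, which is engineered so that six of the summands lie in $\Ker(\pi)$ while $\pi((1,1,t))=t$; the rest is $\z_2$-linear bookkeeping. The benefit of this parametrization will become apparent in the subsequent dimension count, where $u,v,w,t$ index independent degrees of freedom with $t$ recording the image under $\pi$ and $w$ measuring the failure of $\pi$ to intertwine $\eta$.
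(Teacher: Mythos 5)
Your proof is correct and follows essentially the same route as the paper: write $\partial_N$ of the seven-term form, group by first-two-coordinate sign pattern, and then show the four resulting sector equations are equivalent to (\ref{eq:relations}). Your explicit $(a,b,c,d)$-parametrization with the observation $t=\pi(x)$ makes the existence/uniqueness step a bit cleaner than the paper's linear-independence claim, and deriving $\partial_n w=0$ by adding the two equations that share a $\partial_n v$ term is a tidier way to organize the equivalence than the paper's implication table, but the underlying computation is identical.
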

\begin{proof}
In (\ref{eq:form}), the term in brackets ranges over $\Ker(\pi)$ as $u,v,w$ range over $CF(n)$, and the final term maps (under $\pi$) onto all of $\Ker(\partial_n)$ as $t$ ranges over $\Ker(\partial_n)$.
It follows that $x\in\pi^{-1}(\Ker(\partial_n))$ can be written in the form (\ref{eq:form}).
The uniqueness of the expression follows from the fact that
$$\set{(1,1,u)+(-1,-1,u)}{u\in CF(n)}\cup\set{(-1,1,v)+(1,-1,v)}{v\in CF(n)}\cup$$
$$\set{(1,1,w)+(-1,1,w)}{w\in CF(n)}\cup\set{(1,1,t)}{t\in\Ker(\partial_n)}$$
is a linearly independent set.

If $\partial_N(x)=0$, then calculating $\partial_N$ of the right-hand side of (\ref{eq:form}), and setting equal to zero the sum of the remaining entries of all terms that start with the same two entries, yields the equations
\begin{equation}\label{eq:relationstwo}
\left\{
\begin{array}{l}
\tilde\partial w +w+\tilde\partial v+\eta v+t=0,\\
\eta w+w+\eta v+\tilde\partial v+t=0,\\
w+\tilde\partial w+\eta u+\tilde\partial u+\tilde \partial t=0,\\
w+\eta w+\tilde\partial u +\eta u+\eta t=0.
\end{array}
\right.
\end{equation}
For example, after applying $\partial_N$, the sum of the terms that start with $(-1,1)$ is (after cancellation)
$$(-1,1,\tilde\partial w)+(-1,1,w)+(-1,1,\tilde\partial v)+(-1,1,\eta v)+(-1,1,t).$$
Since this sum must be 0, it follows that
$$\tilde\partial w+w+\tilde\partial v+\eta v+t=0.$$
The remaining equations come from examining the terms that start with $(1,-1)$, $(1,1)$, and $(-1,-1)$, respectively.

Using the fact that $\partial=\tilde\partial+\eta$ and $t\in\Ker(\partial)$, it is straightforward to check that (\ref{eq:relations}) is equivalent to (\ref{eq:relationstwo}).
Indeed, labeling the equations in (\ref{eq:relations}) as (\ref{eq:relations}.1), (\ref{eq:relations}.2), and (\ref{eq:relations}.3) and the equations in (\ref{eq:relationstwo}) as (\ref{eq:relationstwo}.1), (\ref{eq:relationstwo}.2), (\ref{eq:relationstwo}.3), and (\ref{eq:relationstwo}.4), we have
\begin{displaymath}
\left\{
\begin{array}{ccc}
(\ref{eq:relations}.1),(\ref{eq:relations}.3) & \Rightarrow & (\ref{eq:relationstwo}.1)\\
(\ref{eq:relations}.1),(\ref{eq:relations}.2) & \Rightarrow & (\ref{eq:relationstwo}.2)\\
(\ref{eq:relations}.2) & \Rightarrow & (\ref{eq:relationstwo}.3)\\
(\ref{eq:relations}.2),(\ref{eq:relations}.3) & \Rightarrow & (\ref{eq:relationstwo}.4)\\
\end{array}
\right.
\end{displaymath}
and
\begin{displaymath}
\left\{
\begin{array}{ccc}
(\ref{eq:relationstwo}.2) & \Rightarrow & (\ref{eq:relations}.1)\\
(\ref{eq:relationstwo}.4) & \Rightarrow & (\ref{eq:relations}.2)\\
(\ref{eq:relationstwo}.3),(\ref{eq:relationstwo}.4) & \Rightarrow & (\ref{eq:relations}.3).\\
\end{array}
\right.
\end{displaymath}
\end{proof}

Let us denote $x$ of the form (\ref{eq:form}) as $x(u,v,w,t)$.
Consider the map
$$\alpha:CF(n)\oplus CF(n)\oplus\Ker(\partial_n)\oplus\Ker(\partial_n)\to\Ima(\partial_n)\oplus\Ker(\partial_n),$$
$$(u,v,w,t)\mapsto(\partial\eta u+\partial v,\partial\eta u +w+t+\eta w).$$
Taking $u=w=0$ shows that $\alpha$ is onto.
\begin{lemma}
$$\Ker(\alpha)=\set{(u,v,w,t)}{u,v,w,t\textrm{ satisfy (\ref{eq:relations})}}.$$
\end{lemma}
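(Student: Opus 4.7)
The plan is to unfold the defining conditions of $\alpha(u,v,w,t)=0$ and convert them into (\ref{eq:relations}) by exploiting one algebraic identity.

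First I would record that $\eta$ commutes with $\partial_n$: both operators only change signs of coordinates, so a one-line check directly from the formulas for $\tilde\partial_n$ and $\eta_n$ (and the decomposition $\partial_n=\tilde\partial_n+\eta_n$) yields $\eta_n\partial_n=\partial_n\eta_n$. Combined with the involution identity $\eta_n^2=\mathrm{id}$, this lets me freely move $\eta$ past $\partial$ and back. Note also that (\ref{eq:relations}.3), namely $\partial_n w=0$, is built into the domain of $\alpha$ and so is automatic.

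Next, the condition $\alpha(u,v,w,t)=0$ unpacks into two equations in $CF(n)$:
\begin{equation*}
\partial\eta u+\partial v=0,\qquad \partial\eta u + w + t + \eta w=0.
\end{equation*}
Applying $\eta$ to the second and using $\partial\eta=\eta\partial$ and $\eta^2=\mathrm{id}$ rewrites it as $\partial u=w+\eta t+\eta w$, which is (\ref{eq:relations}.2). Substituting $\partial\eta u = w+t+\eta w$ into the first equation then gives $\partial v=w+t+\eta w$, which is (\ref{eq:relations}.1). For the converse, starting from (\ref{eq:relations}), applying $\eta$ to (\ref{eq:relations}.2) recovers $\partial\eta u = w+t+\eta w$, which is exactly the second component equation of $\alpha(u,v,w,t)=0$, and adding it to (\ref{eq:relations}.1) produces $\partial\eta u + \partial v = 0$, the first.

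There is no real obstacle; the lemma is a short bookkeeping calculation. The only point of substance is the commutation $\eta\partial=\partial\eta$, after which the four-variable system collapses onto (\ref{eq:relations}) via $\eta^2=\mathrm{id}$.
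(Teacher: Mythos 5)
Your proof is correct and follows essentially the same route as the paper: unpack the two component equations of $\alpha(u,v,w,t)=0$, note that $\partial_n w=0$ is automatic from the domain, and use $\partial\eta=\eta\partial$ together with $\eta^2=\mathrm{id}$ to convert the second equation into (\ref{eq:relations}.2) and then derive (\ref{eq:relations}.1) from the first. No gaps.
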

\begin{proof}
Since $w$ is in the domain of $\alpha$, we have $\partial w=0$.
Furthermore, $\alpha(u,v,w,t)=0$ if and only if
$$\partial\eta u+\partial v=0,$$
$$\partial\eta u+w+t+\eta w=0.$$
Since $\partial\eta=\eta\partial$ and $\eta^2=Id$, the second equation is equivalent to $\partial u=w+\eta t+\eta w$.
The first equation then becomes $\partial v=\eta\partial u=w+t+\eta w$.
These are precisely the equations given in (\ref{eq:relations}).
\end{proof}

It follows that there is a bijection between $\Ker(\alpha)$ and $\Ker(\partial_N)$.
The bijection is the obvious one: $(u,v,w,t)\mapsto x(u,v,w,t)$.

We now complete the proof.
Because $\alpha$ is onto, we have
$$\dim(\Ker(\alpha))=2\dim(CF(n))+2\dim(\Ker(\partial_n))-\dim(\Ima(\partial_n))-\dim(\Ker(\partial_n))$$
$$=2\dim(CF(n))+\dim(HF(n))={1\over 2}\dim(CF(N))+\dim(HF(n)).$$
Because $\Ker(\alpha)$ and $\Ker(\partial_N)$ have the same cardinality, we have
$$\dim(\Ker(\partial_N))={1\over 2}\dim(CF(N))+\dim(HF(n)).$$
It follows that
$$\dim(HF(N))=2\dim(HF(n))=2\cdot 2^n=2^N.$$
This completes the proof of Theorem 1.1.

\section{Verification of Regularity}
In this section we prove items 2, 3, and 4 from the end of Section 2.
We start with 3:
\begin{lemma}
Let $w:(D^2,\partial D^2)\to(\cp^n,T^n)$ be a holomorphic disc with Maslov index 2.
Then $D_w\bar\partial$ is surjective.
\end{lemma}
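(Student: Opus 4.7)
The plan is to combine the explicit classification in Theorem 3.1 with the well-known fact that the linearized $\bar\partial$ operator on a Lagrangian bundle pair over $(D^2,\partial D^2)$ is surjective once it splits as a direct sum of line bundles of Maslov index at least $-1$. First I would reduce $w$ to a normal form. Surjectivity of $D_w\bar\partial$ is invariant under reparametrizing the disc by an element of $\mathrm{Aut}(D^2)$ and under postcomposition with an automorphism of $(\cp^n,T^n)$, in particular by an element of the real diagonal torus acting via phase rotations on homogeneous coordinates. Theorem 3.1 says that a Maslov index $2$ disc has exactly one $\mu_i = 1$ and all other $\mu_j = 0$, so after these normalizations I may assume
$$w(z)=[1:\cdots:1:z:1:\cdots:1],$$
with the $z$ sitting in, say, the $i$-th slot.

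Next I would split $w^*T\cp^n$ along a trivializing frame coming from the toric structure. Pick any $j\neq i$ and work in the affine chart $U_j = \{z_j\neq 0\}$ with coordinates $w_\ell = z_\ell/z_j$ for $\ell\neq j$, in which $w$ becomes $w_\ell \equiv 1$ for $\ell\neq i,j$ and $w_i = z$. The frame $\{\partial/\partial w_\ell\}$ trivializes $T\cp^n|_{U_j}$, and at any point of $T^n\cap U_j$ the tangent space $TT^n$ is spanned over $\mathbb R$ by the vectors $iw_\ell\cdot\partial/\partial w_\ell$. Pulling back, the bundle pair $(w^*T\cp^n,\,(w|_{\partial D^2})^*TT^n)$ splits as a direct sum of $n$ trivial complex line bundles over $D^2$, with boundary condition on the $\ell$-th summand given by the real line $iw_\ell(e^{i\theta})\cdot\mathbb R\subset\mathbb C$.

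The Maslov index of each summand is then immediate: for $\ell\neq i$ the boundary loop is the constant line $i\mathbb R$, giving Maslov index $0$; for $\ell = i$ the boundary loop is $ie^{i\theta}\mathbb R$, giving Maslov index $2$. These sum to $2$, matching $\mu(w)$. Since each summand has Maslov index $\geq 0$, the standard vanishing of $H^1$ for such line bundle pairs on the disc (see e.g.\ Oh \cite{oh1}) gives surjectivity of $\bar\partial$ on each factor, and hence on the whole of $w^*T\cp^n$. The main obstacle is the mild bookkeeping needed to verify that this splitting is preserved by the Cauchy-Riemann operator and that the boundary conditions decompose as claimed; this is automatic because the frame $\{\partial/\partial w_\ell\}$ is holomorphic on $U_j$ and the boundary conditions are coordinate-wise in $U_j$, so the splitting is both holomorphic and Lagrangian-respecting.
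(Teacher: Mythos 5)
Your proposal is correct and follows essentially the same route as the paper, which cites Cho (\cite{cho1} Theorem 10.2) and sketches exactly this idea: the operator splits into one-dimensional operators of non-negative Maslov index (here one factor of index $2$ and the rest of index $0$), each of which is surjective. You have simply filled in the details of the normal form and the holomorphic splitting that the paper leaves to the references.
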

\begin{proof}
This is proved in \cite{cho1} Theorem 10.2 and \cite{choo} Theorem 6.1.
The idea is that $\bar\partial$ splits as a direct sum of one-dimensional operators with non-negative Maslov index.
Such one-dimensional operators are surjective; see for example \cite{ms} Theorem C.1.10.
\end{proof}

Recall that $\mathcal M_1(D^2,2)$ denotes the moduli space of the discs in Lemma 5.1 with one marked point.
Lemma 5.1 implies that $\mathcal M_1(D^2,2)$ is a smooth manifold, and the discussion following Theorem 3.1 then shows that
$$ev:\mathcal M_1(D^2,2)\to T^n$$
is a diffeomorphism.
This proves item 2.

Finally we prove item 4.
The idea is similar to Lemma 5.1: The linearized operator splits as a direct sum of 1-dimensional operators with nonnegative Maslov index, and these are always surjective.
We first prove this last statement.
\begin{lemma}
Let
$$\bar\partial:W^{1,p}_\lambda(\mathbb R\times[0,1],\mathbb C)\to L^p(\Omega^{0,1}(\mathbb R\times[0,1])\otimes\mathbb C))$$
be the standard Cauchy-Riemann operator.
Suppose $\lambda:\mathbb R\times\{0,1\}\to\Lambda(\mathbb C^n)$ has nonnegative Maslov index, and $\lambda(\pm\infty,0)=\mathbb R$, $\lambda(\pm\infty,1)=i\cdot\mathbb R$.
Then $\bar\partial$ is surjective.
\end{lemma}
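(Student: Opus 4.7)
The plan is to reduce this strip problem to the disk problem already handled in Lemma 5.1, using the conformal identification of the strip with a twice-punctured disk (as in the proof of Proposition 3.2). The transversality of the constant asymptotic Lagrangians $\mathbb{R}$ and $i\mathbb{R}$ lets the boundary condition be continuously extended across the two punctures to a loop of Lagrangians on $\partial D^2$ of nonnegative Maslov index, at which point Theorem C.1.10 of \cite{ms} produces surjectivity on the disk.

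First, I would set up the conformal identification $\mathbb{R}\times[0,1]\cong D^2\setminus\{-1,+1\}$ so that the edge $\{t=0\}$ maps to the lower arc of $\partial D^2$, $\{t=1\}$ to the upper arc, and $\pm\infty$ to the punctures $\pm 1$. On $(\partial D^2)\setminus\{\pm 1\}$ the boundary data $\lambda$ is asymptotic at each puncture to the two transverse constants $\mathbb{R}$ and $i\mathbb{R}$; inserting the counterclockwise quarter-turn arc of Lagrangians in $\Lambda(\mathbb{C})$ at each of $\pm 1$ produces a continuous extension $\tilde\lambda:\partial D^2\to\Lambda(\mathbb{C})$. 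Each corner contributes $+\tfrac{1}{2}$ to the Maslov index, so $\mu(\tilde\lambda)=\mu(\lambda)+1\geq 1$.

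Next, I would match the analytic setup: by elliptic regularity and the removable singularities theorem applied at each puncture, the space $W^{1,p}_\lambda(\mathbb{R}\times[0,1],\mathbb{C})$ corresponds, under this conformal identification, to the standard $W^{1,p}$ sections of the trivial complex line bundle on $D^2$ with boundary values in $\tilde\lambda$, and the $L^p$ target of $(0,1)$-forms corresponds similarly, up to a nonvanishing conformal factor. The Cauchy--Riemann operators are thereby conjugate, so surjectivity on the disk is equivalent to surjectivity on the strip. Applying Theorem C.1.10 of \cite{ms} to the disk problem, and using $\mu(\tilde\lambda)\geq 1$, delivers the conclusion.

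The main obstacle is the analytic matching step. One must verify that finite $W^{1,p}$-norm on the infinite strip corresponds precisely to the right local regularity at the corresponding punctures on the disk, so that the correspondence of Sobolev spaces is a bona fide isomorphism and the cokernels agree. Granting this, the lemma becomes a direct consequence of the cited disk result; in particular there is no need for further Fredholm index computations or for a direct spectral analysis of the asymptotic operator.
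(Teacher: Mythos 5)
Your approach is genuinely different from the paper's, and the step you flag as the ``main obstacle'' is in fact a real gap rather than a formality. The paper does \emph{not} try to conjugate the strip $\bar\partial$-operator to a disk $\bar\partial$-operator via a Sobolev-space isomorphism. Instead it takes a putative cokernel element $\eta\in L^p$, shows by integration by parts that $\eta$ is $\partial$-holomorphic and satisfies the boundary conditions $\lambda$, \emph{doubles} $\eta$ by Schwarz reflection across $t=0$ to get $\tilde\eta$ on $\mathbb R\times[-1,1]\cong D^2\setminus\{\pm1\}$ with a genuine loop of Lagrangian boundary conditions of Maslov index $2\mu$, shows $\tilde\eta$ extends over the punctures by removable singularities, and then identifies $\tilde\eta$ with a cokernel element of a disk Riemann--Hilbert problem of Maslov index $2\mu-1\ge -1$, which vanishes. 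Thus only the smooth function $\eta$ needs to be carried across the punctures, not a whole Banach space.

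The Sobolev-space correspondence you would need does not hold. Concretely, the constants belong to $W^{1,p}$ of $D^2$ but not to $W^{1,p}$ of the infinite strip, so pulling back by the conformal identification cannot give an isomorphism; the conformal factor $|f'|$ degenerates at $\pm1$, so it is not the ``nonvanishing conformal factor'' your argument requires. Relatedly, the Fredholm indices do not match: the strip operator has index $\mu(\lambda)$, while with your corner convention ($+\tfrac12$ per corner, so $\mu(\tilde\lambda)=\mu(\lambda)+1$) the disk operator has index $\mu(\tilde\lambda)+1=\mu(\lambda)+2$, off by $2$. (Choosing the opposite corner turns gives $\mu(\tilde\lambda)=\mu(\lambda)-1$ and matches the index, but that still does not make the unweighted Sobolev spaces agree; one would need weighted Sobolev spaces with weights adapted to the asymptotic operator.) Either way, surjectivity of the disk problem does not transfer to the strip problem by the correspondence you describe, so as written the argument is incomplete. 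The doubling argument in the paper is the clean way to avoid all of this.
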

\begin{proof}
Let $\mu\geq0$ be the Maslov index.
Suppose $\eta\in L^p(\mathbb R\times[0,1],\mathbb C)$ is in the $L^2$ orthogonal complement of the image of $\bar\partial$.
Then a straightforward integration by parts argument shows that $\partial \eta=0$ and $\eta$ satisfies the same boundary conditions $\lambda$.
Using the Schwarz reflection principle, define $\tilde\eta:\mathbb R\times[-1,1]\to\mathbb C$ that satisfies
\begin{itemize}
\item $\tilde\eta|\mathbb R\times[0,1]=\eta$,
\item the $W^{1,2}$ norm of $\tilde\eta$ is twice that of $\eta$, and
\item $\tilde\eta(s,1)\in\lambda(s,1)$, $\tilde\eta(s,-1)\in\overline{\lambda(s,1)}$.
\end{itemize}

$\mathbb R\times[-1,1]$ is conformally equivalent to $D^2\setminus\{-1,1\}$ so the domain of $\tilde\eta$ can be viewed as $D^2\setminus\{-1,1\}$.
The Lagrangian boundary condition on the strip extends continuously to a Lagrangian boundary condition on all of $\partial D^2$.
Moreover, it has Maslov index $2\mu$.
Therefore, $\tilde\eta$ extends smoothly to all of $D^2$ by the removable singularities theorem.
Again using integration by parts, $\tilde\eta$ can be viewed as an element in the kernel of the adjoint of a Riemann-Hilbert problem on the disc with Maslov index $2\mu-1\geq-1$ (see for example \cite{oh4} formula (5.8)).
But the cokernel of such a Riemann-Hilbert problem is $0$, hence $\tilde\eta=0$ and thus $\eta=0$.
This shows that $\bar\partial$ is surjective.
\end{proof}

\begin{lemma}
Let $u$ be a holomorphic strip with $\mu(u)=1$ or $2$.
Then $D_u\bar\partial$ is surjective.
\end{lemma}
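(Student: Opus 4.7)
The plan is to reduce to Lemma 5.2 by splitting $D_u\bar\partial$ as a direct sum of $2n-1$ one-dimensional Cauchy-Riemann operators with nonnegative Maslov index, mirroring the argument for Lemma 5.1. By Propositions 3.2 and 3.3, $u$ is the upper half of a holomorphic disc $\tilde u:(D^2,\partial D^2)\to(\cp^{2n-1},T^{2n-1})$ whose homogeneous coordinates are $\epsilon_i\phi_i(z)$, where each $\epsilon_i\in\{\pm1\}$ and each $\phi_i$ is a (possibly empty) product of Blaschke factors of $D^2$ preserving $\{\pm1\}$. Exactly as in \cite{cho1} Theorem 10.2, this Blaschke presentation induces a splitting
$$\tilde u^* T\cp^{2n-1} = L_1 \oplus \cdots \oplus L_{2n-1}$$
into complex line bundles over $D^2$, and the splitting is compatible with the Lagrangian boundary condition along $\partial D^2$ because in an affine chart about any intersection point, $T^{2n-1}$ is a product torus with tangent space $i\mathbb R^{2n-1}$.

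Next I would restrict this splitting to the upper half of the disc, which we identify with the strip $\mathbb R\times[0,1]$, and verify that it is also compatible with the $\rp^{2n-1}$ boundary condition along $t=0$. This is the key geometric observation: in the same affine chart, $T\rp^{2n-1}$ at each intersection point equals $\mathbb R^{2n-1}$, the same product decomposition as $TT^{2n-1}$, and along the strip this real structure is preserved because each $\phi_i$ sends the real axis of $D^2\setminus\{\pm1\}$ into $\mathbb R$. Accordingly $D_u\bar\partial$ decomposes as $\bigoplus_{j=1}^{2n-1} D_j$, where each $D_j$ is a Cauchy-Riemann operator on a trivial complex line bundle over the strip satisfying the boundary conditions $\mathbb R$ at $t=0$ and $i\mathbb R$ at $t=1$ required by Lemma 5.2.

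Finally, since each $\phi_i$ is a genuine Blaschke product with no poles, each $D_j$ has nonnegative Maslov index and $\sum_j \mu(D_j) = \mu(u) \in \{1,2\}$. Lemma 5.2 then gives surjectivity of each $D_j$, and the direct sum of surjective operators is surjective, so $D_u\bar\partial$ is surjective. The main obstacle will be verifying that the splitting is compatible with the $\rp^{2n-1}$ boundary condition along $t=0$; everything else is bookkeeping via Propositions 3.2 and 3.3 and the reduction to Lemma 5.2.
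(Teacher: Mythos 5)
Your proposal is correct and follows essentially the same route as the paper: use the classification of Propositions 3.2 and 3.3 to realize $u$ as the upper half of a Blaschke-product disc, split $D_u\bar\partial$ into one-dimensional Cauchy--Riemann operators on the strip with nonnegative Maslov indices summing to $\mu(u)$, and apply Lemma 5.2 to each summand. The paper states the splitting more tersely (explicitly only listing the index patterns $1\oplus 0\oplus\cdots$, $1\oplus 1\oplus 0\oplus\cdots$, $2\oplus 0\oplus\cdots$), while you spell out the compatibility of the splitting with both Lagrangian boundary conditions, which is the same underlying verification.
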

\begin{proof}
Consider first the case $\mu(u)=1$.
Using Proposition 3.2, we can choose coordinates so that $u$ is the upper half of the disc
$$(D^2,\partial D^2)\to(\cp^n,T^n),\qquad z\mapsto(z,1,\cdots,1).$$
The linearization $D_u\bar\partial$ splits as a direct sum 
$$D_u\bar\partial=\bar\partial_1\oplus\bar\partial_0\oplus\cdots\oplus\bar\partial_0$$
where
$$\bar\partial_i:W^{l,p}_{\lambda_i}(\mathbb R\times[0,1],\mathbb C)\to W^{l-1,p}(\mathbb R\times[0,1],\mathbb C).$$
The $\lambda_i$ subscript denotes a Lagrangian boundary condition with Maslov index $i$.
By the previous lemma, each $\bar\partial_i$ is surjective, and thus $D_u\bar\partial$ is surjective.

The case where $\mu(u)=2$ is similar.
The difference is that the linearized operator splits as
$$\bar\partial_1\oplus\bar\partial_1\oplus\bar\partial_0\oplus\cdots\oplus\bar\partial_0$$
or 
$$\bar\partial_2\oplus\bar\partial_0\oplus\cdots\oplus\bar\partial_0.$$
$\bar\partial_2$ denotes the operator with boundary conditions of Maslov index 2.
Again the previous lemma then implies that $D_u\bar\partial$ is surjective.
\end{proof}

\section{Novikov Ring Coefficients}
In this section we compute the Floer homology with coefficients in the universal Novikov ring.
The universal Novikov ring is 
$$\unov=\Bigl\{\,\,{\sum_{(\lambda,n)\in\mathbb R\times\z} b_{(\lambda,n)}T^{\lambda}e^{n}}\,\,|\,\,{b_{(\lambda,n)}\in\z_2\textrm{ and }\forall C\in\mathbb R,\#\set{b_{(\lambda,n)}\neq 0}{\lambda<C}<\infty\,\Bigr\}}.$$
See \cite{fooo} Section 20 for more details.
The result is
\begin{thm}
$$HF(\rp^{2n-1},T^{2n-1}:\Lambda_{\z_2})=(\Lambda_{\z_2})^{2^{n}}.$$
\end{thm}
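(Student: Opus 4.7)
The plan is to reduce the Novikov-coefficient computation to the already-completed $\z_2$ computation by exploiting the fact that monotonicity makes the Novikov weighting a global unit scalar on the boundary operator.

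First I would fix a convention: the boundary operator on $CF(\rp^{2n-1},T^{2n-1})\otimes_{\z_2}\unov$ is
$$\partial_\Lambda(p)=\sum_q\sum_{[u]}T^{\omega(u)}e^{\mu(u)}\,q,$$
where the inner sum ranges over isolated (hence Maslov index $1$) $J_0$-holomorphic strips from $p$ to $q$. By Proposition 3.2, every such strip has Maslov index $1$, and monotonicity of the pair $(\rp^{2n-1},T^{2n-1})$ forces all such strips to share a common symplectic area $\omega_0=c$. Therefore every non-zero entry in the matrix of $\partial_\Lambda$ carries the same Novikov weight $\tau:=T^{c}e$, and as a $\unov$-linear map
$$\partial_\Lambda=\tau\cdot(\partial\otimes\mathrm{id}_{\unov}),$$
where $\partial$ is the $\z_2$-boundary of Section 3.

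Second, I would verify $\partial_\Lambda^2=0$. The only obstruction, as in Section 2, is the contribution of Maslov index $2$ disk bubbles to $\langle\partial_\Lambda^2 p,p\rangle$. Monotonicity again makes all such disks have the same symplectic area $2c$, so these contributions assemble into
$$\tau^2\cdot(\Phi_{\rp^{2n-1}}(p)+\Phi_{T^{2n-1}}(p))\cdot p = \tau^2\cdot(2n)\cdot p = 0\in\unov,$$
since $2n\equiv 0\pmod{2}$. Thus $\partial_\Lambda^2=0$ for exactly the same parity reason as in the $\z_2$ case.

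Third, since $\tau$ is a unit of $\unov$, the kernel and image of $\partial_\Lambda$ coincide with those of $\partial\otimes\mathrm{id}_{\unov}$. As $\unov$ is flat (indeed a field extension of $\z_2$ after inverting $T$, but flatness over $\z_2$ is immediate), we obtain
$$HF(\rp^{2n-1},T^{2n-1}:\unov)=HF(\rp^{2n-1},T^{2n-1}:\z_2)\otimes_{\z_2}\unov=(\unov)^{2^n}$$
by Theorem 1.1.

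The main thing to be careful about is the first step: confirming that monotonicity with a common constant $c$ really does make every Maslov $1$ strip have area exactly $c$ (not merely proportional to $\mu$ within each class) and, analogously, that every Maslov $2$ disk on either Lagrangian has area $2c$. This follows because both $\rp^{2n-1}$ and $T^{2n-1}$ are monotone with the same monotonicity constant inherited from $(\cp^{2n-1},\omega_{\text{FS}})$, but it is the one place where the identification $\partial_\Lambda=\tau\cdot\partial$ could fail and must be verified explicitly.
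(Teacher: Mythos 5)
Your argument is correct, and it reaches the same structural conclusion as the paper --- that over the Novikov field the differential has the same rank as the $\z_2$ differential --- but by a noticeably more streamlined route. The paper sets up the full Floer--Novikov formalism of Usher: a principal $\Gamma$-bundle $P\to\rp^k\cap T^k$ with $\Gamma\cong\z$ built from the cover of the path space, an action functional and grading, and a carefully normalized lifted basis $\{v_q\}$ (rescaled by powers of $e$ according to the parity of the number of $1$'s in $q$). With that normalization the comparison is column-by-column (Lemma 6.5): some columns of $\partial'$ equal those of $\partial$ and the others are $e$ times them, so $\partial\otimes 1=M\circ\partial'$ for an invertible diagonal $M$. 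You instead work directly in $CF\otimes_{\z_2}\unov$ with the standard weighting $T^{E(u)}e^{\mu(u)}$, and since every isolated strip has the same energy $c$ and Maslov index $1$, the entire differential is the single unit $\tau=T^ce$ times $\partial\otimes 1$. The two chain complexes are isomorphic (rescaling each basis vector by a unit), and both proofs ultimately rest on the same two facts: equality of the energies of all isolated strips, and invariance of rank under multiplication by units over the field $\nov$ (followed by flat base change to $\unov$). What the paper's longer setup buys is a definition of $HF(\cdot:\unov)$ that fits the filtered/spectral-invariant framework of \cite{u}; what your version buys is brevity, at the cost of taking the "standard" universal-Novikov complex as the definition.

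One point deserves a more careful justification than "monotonicity forces all such strips to share a common symplectic area." Monotonicity of the pair, via Proposition 2.7 of \cite{oh1}, a priori only compares the energies of two strips with the \emph{same} endpoints: $E(u)-E(u')=c(\mu(u)-\mu(u'))$. To see that Maslov index $1$ strips with \emph{different} endpoints all have energy $c$, use the Schwarz reflection of Section 3: each such strip doubles to a Maslov index $2$ holomorphic disk with boundary on $T^{2n-1}$ and twice the energy, and monotonicity of the single Lagrangian $T^{2n-1}$ (i.e.\ $I_\omega=c\cdot I_\mu$ on $\pi_2(\cp^{2n-1},T^{2n-1})$) forces all Maslov index $2$ disks to have the same area $2c$. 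Alternatively this is immediate from the explicit classification in Proposition 3.2, since the isolated strips are all halves of disks related by unitary transformations. The same remark disposes of the equal-area claim for the index $2$ disk bubbles in your verification of $\partial_\Lambda^2=0$ (which is in any case redundant once $\partial_\Lambda=\tau\cdot(\partial\otimes 1)$ is established, since then $\partial_\Lambda^2=\tau^2(\partial^2\otimes1)=0$).
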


We first explain what is meant by homology with coefficients in $\unov$.
We start with the following definition, taken from \cite{u}.
\begin{dfn}
A graded filtered Floer-Novikov complex $\mathfrak c$ consists of the following data:
\begin{enumerate}
\item A principal $\Gamma$-bundle $P\to S$, where $S$ is a finite set and $\Gamma$ is a finitely generated abelian group.
\item An action functional ${\mathcal A}:P\to \mathbb R$ and a period homomorphism $\omega:\Gamma \to \mathbb R$ satisfying ${\mathcal A}(g\cdot p)={\mathcal A}(p)-\omega(g)$.
\item A grading $gr:P\to \mathbb Z$ and a degree homomorphism $d:\Gamma \to \mathbb Z$ satisfying $gr(g\cdot p)=gr(p)+d(g)$.
\item A map $n':P\times P\to R$ ($R$ is a commutative ring) satisfying the following conditions:
\begin{enumerate}
\item $n'(p,p')=0$ unless ${\mathcal{A}}(p)>{\mathcal{ A}}(p')$ and $gr(p')=gr(p)-1$,
\item $n'(g\cdot p,g\cdot p')=n'(p,p')$,
\item for each $p\in P$, the formal sum 
$$\partial' p=\sum_{q\in P}n'(p,q)q$$ 
belongs to the Floer chain complex 
$$C_{*}(\mathfrak c)=\{\,\sum_{q\in P}a_qq\,|\,a_q\in R,\#\{\,q\,|\,a_q\neq 0,\, {\mathcal {A}}(q)>C\,\}<\infty \,\forall\, C\in \mathbb R\,\},$$
\item and with the Novikov ring of $\Gamma$  defined by
$$\Lambda_{\Gamma,\omega}=\{\,\sum_{g\in\Gamma}b_gg\,|\,b_g\in R,\,\#\{\,g\,|\,b_q\neq 0,\,\omega(g)<C\,\}<\infty\, \forall\, C\in \mathbb R\,\},$$
we require that $C_*$ inherits the structure of a $\Lambda_{\Gamma,\omega}$-module, the operator $\partial':P\to C_*$ extends to a $\Lambda_{\Gamma,\omega}$-module homomorphism $\partial':C_*\to C_*$, and it satisfies $\partial'^2=0$.
\end{enumerate}
\end{enumerate}
\end{dfn}
(The notation $n'$ and $\partial'$ is used because we will need to refer to $n$ and $\partial$ as defined in Section 2.)

We will construct a chain complex $CF(\rp^k,T^k:\nov)$ (for $k=2n-1$) that plays the role of $C_*(\mathfrak c)$ in the definition.
Then $HF(\rp^k,T^k:\nov)$ is defined to be the homology of $C_*(\mathfrak c)=CF(\rp^k,T^k:\nov)$.
There is a homomorphism $\nov\to\unov$ given by
$$\sum_{g\in\Gamma}b_gg\mapsto\sum_{g\in\Gamma}b_gT^{\omega(g)}e^{d(g)/2}.$$
Using this homomorphism we can define a $\unov$ chain complex 
$$\partial'\otimes1:C_*(\mathfrak c)\otimes_{\nov}\unov\to C_*(\mathfrak c)\otimes_{\nov}\unov.$$
By definition, $HF(\rp^k,T^k:\unov)$ is the homology of this complex.
To compute it we will first calculate $HF(\rp^k,T^k:\nov)$ and then use the fact that $\nov$ is a field to conclude that $HF(\rp^k,T^k:\unov)=HF(\rp^k,T^k:\nov)\otimes_{\nov}\unov$.

We turn to constructing $CF(\rp^k,T^k:\nov)$.
Let $q_0=[1:\cdots:1]\in\rp^k\cap T^k$.
We can also think of $q_0$ as the constant path $q_0:[0,1]\to\cp^k$, $q_0(t)=q_0$ for all $t$.
Likewise, any point of $\rp^k\cap T^k$ can be thought of as a constant path.
In the following, it should be clear from context when we are taking this point of view.
Let 
$$\Omega=\set{\gamma:[0,1]\to\cp^k}{\gamma(0)\in\rp^k,\gamma(1)\in T^k},$$
and let $\Omega(q_0)$ be the path component of $\Omega$ containing the constant path $q_0$.
Any $[u]\in\pi_1(\Omega(p))$ is a map $u:S^1\times[0,1]\to\cp^k$ (well-defined up to homotopy).
Let
$$I_\omega:\pi_1(\Omega(q_0))\to\mathbb R$$
be the homomorphism given by $I_\omega([u])=\int u^*\omega$.
Notice that $u$ defines a bundle pair
$$(u^*T\cp^k,(u|S^1\times\{0\})^*T\rp^k\amalg(u|S^1\times\{1\})^*TT^k)$$ 
over the cylinder $S^1\times[0,1]$.
Define another homomorphism
$$I_\mu:\pi_1(\Omega(q_0))\to\z$$
by letting $I_\mu([u])$ be the Maslov index of this bundle pair.

Since $N=\Ker(I_\mu)\cap\Ker(I_\omega)$ is a normal subgroup of $\pi_1(\Omega(q_0))$, it defines a normal cover $\widetilde\Omega$ of $\Omega(q_0)$.
Explicitly, points of $\widetilde\Omega$ are equivalence classes of pairs $(\gamma,u)$, where $\gamma\in\Omega(q_0)$ and $u$ is a path from $q_0$ to $\gamma$.
$[\gamma,u]=[\gamma',u']$ if and only if $\gamma=\gamma'$, $I_\mu(u)=I_\mu(u')$, and $I_\omega(u)=I_\omega(u')$.
The automorphism group of the cover is $\pi_1(\Omega(q_0))/N$.
Let $\Gamma$ be this group.

\begin{lemma}
$\Gamma$ is isomorphic to $\mathbb Z$.
\end{lemma}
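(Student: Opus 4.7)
The plan is to show $\Gamma \cong \mathbb{Z}$ by analyzing the homomorphism $(I_\mu, I_\omega) \colon \pi_1(\Omega(q_0)) \to \mathbb{Z} \oplus \mathbb{R}$, whose kernel is precisely $N$. Thus $\Gamma$ is isomorphic to the image of this map, and the strategy is to show that the image is (i) contained in a rank-one subgroup of $\mathbb{Z} \oplus \mathbb{R}$, and (ii) nontrivial.

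For the proportionality in (i), the key ingredient is that $\rp^{2n-1}$ and $T^{2n-1}$ are both monotone Lagrangians in $\cp^{2n-1}$ with the same monotonicity constant $c > 0$ (a standard Fubini--Study calculation gives $c = \pi/(4n)$ for both), and that $\cp^{2n-1}$ itself is monotone with $[\omega] = 2c \cdot c_1$. Given a loop $u \colon S^1 \times [0,1] \to \cp^{2n-1}$ representing a class in $\pi_1(\Omega(q_0))$, I would collapse the basepoint section $\{\ast\} \times [0,1]$ to get a map $\tilde u \colon D^2 \to \cp^{2n-1}$ whose boundary decomposes as two arcs $\alpha_0 \subset \rp^{2n-1}$ and $\alpha_1 \subset T^{2n-1}$ meeting at $q_0$. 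Since $\cp^{2n-1}$ is simply connected, choose smooth disks $v_0, v_1 \colon D^2 \to \cp^{2n-1}$ filling $\alpha_0$ and $\alpha_1$; these represent classes in $\pi_2(\cp^{2n-1}, \rp^{2n-1})$ and $\pi_2(\cp^{2n-1}, T^{2n-1})$ respectively. Gluing $\tilde u$, $v_0$, and $v_1$ along their common arcs yields a sphere $S \in \pi_2(\cp^{2n-1})$, and additivity of area and Maslov index under gluing gives
\[
I_\omega(u) + \omega(v_0) + \omega(v_1) = \omega(S), \qquad I_\mu(u) + I_\mu(v_0) + I_\mu(v_1) = 2 c_1(S).
\]
Applying monotonicity to each of $v_0$, $v_1$, and $S$ (all with the same constant $c$) and subtracting then yields $I_\omega(u) = c \cdot I_\mu(u)$.

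For the nontriviality in (ii), Theorem 3.1 provides a Maslov index 2 holomorphic disk $w \colon (D^2, \partial D^2) \to (\cp^{2n-1}, T^{2n-1})$ through $q_0$. The standard \emph{boundary-sliding} construction (collapsing one arc of $\partial D^2$ to $q_0$) converts $w$ into a loop in $\Omega(q_0)$ whose Maslov index equals $I_\mu(w) = 2$, so the image of $I_\mu$ contains $2$ and is nontrivial. Combining (i) and (ii), the image of $(I_\mu, I_\omega)$ is a nontrivial subgroup of the rank-one lattice $\{(m, c m) : m \in \mathbb{Z}\}$, and is therefore isomorphic to $\mathbb{Z}$; this gives $\Gamma \cong \mathbb{Z}$.

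The main subtlety is the Maslov index gluing formula for $\tilde u$: the Lagrangian boundary condition jumps at the two corner points of $\alpha_0 \cap \alpha_1 = \{q_0\}$, so a convention (for example, the canonical short path through $T_{q_0}\rp^{2n-1} \cap T_{q_0}T^{2n-1}$ in the Lagrangian Grassmannian) must be fixed. The convention cancels in the three-term sum, so the proportionality conclusion is independent of the choice. Verifying equality of the two monotonicity constants also requires a Fubini--Study area calculation, but this is classical.
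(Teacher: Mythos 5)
Your argument is correct and follows the same underlying strategy as the paper: identify $\Gamma$ with the image of the combined homomorphism $(I_\mu,I_\omega)\colon\pi_1(\Omega(q_0))\to\mathbb Z\oplus\mathbb R$, show that monotonicity confines this image to a rank-one lattice, and observe that it is nonzero. The one substantive difference is that the paper invokes Proposition~2.7 of \cite{oh1} as a black box for the proportionality $I_\omega = c\cdot I_\mu$ on $\pi_1(\Omega(q_0))$, whereas you reconstruct the underlying capping argument directly: cut the loop of paths into a bigon, fill the two boundary loops on $\rp^{2n-1}$ and $T^{2n-1}$ with disks (possible because $\cp^{2n-1}$ is simply connected), glue to a sphere, and combine the three monotonicity relations with compatible constants. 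This is essentially what Oh's proof does, so the two arguments are equivalent; yours is more self-contained, at the cost of having to address the corner-smoothing and filling-ambiguity conventions, which you appropriately flag. Note that the torsion hypothesis on $\pi_1(\rp^k)$ that the paper carries along is vacuous in your version---it is needed in Oh's general statement to allow capping when the ambient manifold is not simply connected, but $\pi_1(\cp^{2n-1})=0$ lets you cap unconditionally. You also make explicit the nontriviality of $\Ima(I_\mu)$ via a Maslov index $2$ disk on $T^{2n-1}$ and the boundary-sliding map $\pi_2(\cp^{2n-1},T^{2n-1},q_0)\to\pi_1(\Omega(q_0))$; the paper's proof of the lemma leaves this implicit and only exhibits a generator afterward, via the concatenated strip $u_0\#u_0'$ with $I_\mu=2$.
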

\begin{proof}
By construction, $\Gamma$ is isomorphic to $\pi_1(\Omega(q_0))/\Ker(I_\mu)\cap\Ker(I_\omega)$.
The latter group is isomorphic to $\Ima(I_\mu)\oplus\Ima(I_\omega)\subset\z\oplus\mathbb R$.
$\rp^k$ and $T^k$ are both monotone and $\pi_1(\rp^k)$ is torsion, so Proposition 2.7 in \cite{oh1} implies that there exists a constant $c>0$ such that $I_\omega=cI_\mu$.
Thus $\Ima(I_\mu)\oplus\Ima(I_\omega)$ is isomorphic to $\z$.
\end{proof}

$\rp^k$ and $T^k$ are both orientable (because $k$ is odd), so the image of $I_\mu$ is contained in $2\z$.
Moreover, if $u_0$ denotes the unique holomorphic strip of Maslov index 1 from $q_0$ to $[-1:1:\cdots:1]$, and $u_0'$ denotes the unique holomorphic strip of Maslov index 1 from $[-1:1:\cdots:1]$ to $q_0$, then $I_\mu(u_0\#u_0')=2$.
Thus the image of $I_\mu\oplus I_\omega$ is generated by $(2,2c)$, where $c$ is the energy of $u_0$ (which is the same as the energy of any strip with Maslov index 1).
We denote the preimage of this element in $\Gamma$ as $e$, and we use multiplicative notation to describe $\Gamma$.
That is, 
$$\Gamma=\set{e^j}{j\in\z},\qquad e^j\cdot e^l=e^{j+l}.$$
We take the degree and period homomorphisms to be 
$$\omega(e^j)=2jc,$$
$$d(e^j)=-2j.$$

Let
$$P=\set{[\gamma,u]\in\tilde\Omega}{\gamma\textrm{ is a constant path}}$$
and
$$S=\rp^k\cap T^k.$$
There is an obvious projection $P\to S$ that makes $P$ a principal $\Gamma$-bundle.
We let $\mathcal A$ be the action functional
$$\mathcal A:P\to\mathbb R,\qquad[\gamma,u]\mapsto-\int u^*\omega.$$
Next we define the grading $gr:P\to\z$.
If $[q,u]\in P$, then $u$ is a map $u:[0,1]\times[0,1]\to\cp^n$, with $u(\cdot,1)\in T^n$, $u(\cdot,0)\in\rp^n$, $u(0,t)=q_0$, and $u(1,t)=q$.
Extend the domain of $u$ to $\mathbb R\times[0,1]$, by keeping the value of $u$ fixed on each component in the complement of $(0,1)\times[0,1]$.
Then we define 
$$gr([q,u])=-\mu(u).$$
For example, if $u$ is homotopic to an isolated Floer trajectory connecting $q_0$ to $q$, then $gr([q,u])=-1$.

Finally, we take $R=\z_2$ and define $n':P\times P\to \z_2$ by letting $n'([q_1,u],[q_2,u'])$ be the number mod-2 of holomorphic strips $w$ of Maslov index 1 that start at $q_1$, end at $q_2$, and are in the homotopy class of $u^{-1}\#u'$ (that is, as paths in $\Omega(q_0)$, $u\#w$ and $u'$ are homotopic).

The Novikov ring $\nov$ from Definition 6.2 can now be described as
$$\nov=\Bigl\{\,\,\sum_{k\in\z}a_ke^k\,\,|\,\,a_k\in\z_2\textrm{, there exists $k_0$ such that $a_k=0$ for all $k\leq k_0$}\,\Bigr\}.$$
Note that $\nov$ is actually a field.

We now calculate the homology.
In order to describe the matrix for $\partial'$ we need to first choose an ordered basis for $CF(\rp^k,T^k:\nov)$.
We do this as follows: 
For each $q\in\rp^k\cap T^k$, let $u_q$ be any holomorphic strip from $q_0$ to $q$.
Such a holomorphic strip exists by the results of Section 3, but of course it does not necessarily have to be isolated.
Recall that $c$ is the energy of an isolated strip (that is, a strip of Maslov index 1).
If the number of 1's in the homogeneous coordinates of $q$ is even then the energy of $u_q$ is $2jc$ for some $j$.
Let $v_q=e^{-j}\cdot [q,u_q]\in P$.
Then by construction we have $\mathcal A(v_q)=0$ and $gr(v_q)=d(e^{-j})+gr([q,u_q])=2j-2j=0$.
If the number of $1$'s is odd then the energy is $(2j+1)c$.
Let $v_q=e^{-j}\cdot [q,u_q]\in P$ in this case, and then we have $\mathcal A(v_q)=-c$ and $gr(v_q)=-1$.
Thus
\begin{displaymath}
gr(v_q)=\left\{
\begin{array}{ll}
0 & \textrm{$q$ has an odd number of 1's}\\
-1 & \textrm{$q$ has an even number of 1's,}\\
\end{array}
\right.
\end{displaymath}
and $\set{v_q}{q\in\rp^k\cap T^k}$ is a basis for $CF(\rp^k,T^k:\nov)$ over $\nov$.
Order this basis in any way with $v_{q_0}$ first and denote it as $(v_{q_0},v_{q_1},\cdots,v_{q_{2^k}})$.
Let $u_{q_i}'$ be such that $v_{q_i}=[q_i,u_{q_i}']$.
$\partial'$ can now be described by a matrix with respect to this basis.

To calculate the dimension of the image of $\partial'$ we compare the matrix to the matrix for $\partial$.
The ordered basis of $CF(\rp^k,T^k:\nov)$ that we constructed induces an ordered basis of $CF(\rp^k,T^k)$ in an obvious way (they both have bases in bijective correspondence to $\rp^k\cap T^k$).
We think of $\partial$ as being a matrix with respect to this basis.

\begin{lemma}
The columns of $\partial$ corresponding to the $v_q$'s with $q$ having an even number of $1$'s in the homogeneous coordinates agree with the corresponding columns in $\partial'$.
The remaining columns of $\partial'$ are $e$ times the corresponding columns in $\partial$.
\end{lemma}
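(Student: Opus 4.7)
The approach is to compute each column of $\partial'$ directly from the definition of $n'$, Proposition 3.2, and the grading convention $gr([q',u'])=-\mu(u')$, $d(e^n)=-2n$, and then read off the comparison with the corresponding column of $\partial$ given by (\ref{eq:boundary}).

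Step one: unpack $\partial'(v_q)$. Writing $v_q=e^{-j_q}[q,u_q]$, $\nov$-linearity gives $\partial'(v_q)=e^{-j_q}\partial'([q,u_q])$. By Proposition 3.2 the Maslov-index-$1$ strips starting at $q$ are exactly the $k+1$ strips $w_0,\ldots,w_k$, one for each point $q_i'$ obtained from $q$ by negating the $i$-th coordinate. By the definition of $n'$ each such strip contributes a single term, so
$$\partial'([q,u_q])=\sum_{i=0}^k [q_i',u_q\# w_i].$$

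Step two: rewrite each summand in terms of $v_{q_i'}=e^{-j_{q_i'}}[q_i',u_{q_i'}]$. Since $[q_i',u_q\# w_i]$ and $[q_i',u_{q_i'}]$ lie over the same base point, they differ by a unique $\Gamma$-element $e^{m_i}$, and the grading relations force $m_i=(\mu(u_q)+1-\mu(u_{q_i'}))/2$. Now split on the parity of the number of $1$'s in the homogeneous coordinates of $q$. In the even case $\mu(u_q)=2j_q$, and since flipping one coordinate reverses the parity, $\mu(u_{q_i'})=2j_{q_i'}+1$; hence $m_i=j_q-j_{q_i'}$ and $[q_i',u_q\# w_i]=e^{j_q}v_{q_i'}$. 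Substituting and applying the overall factor $e^{-j_q}$ gives $\partial'(v_q)=\sum_i v_{q_i'}$, which is precisely the $v_q$-column of $\partial$ from (\ref{eq:boundary}). In the odd case $\mu(u_q)=2j_q+1$ and $\mu(u_{q_i'})=2j_{q_i'}$, so the same computation yields $m_i=j_q+1-j_{q_i'}$, $[q_i',u_q\# w_i]=e^{j_q+1}v_{q_i'}$, and therefore $\partial'(v_q)=e\sum_i v_{q_i'}$, i.e., $e$ times the corresponding column of $\partial$.

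The only subtlety is the bookkeeping of $\Gamma$-exponents. The essential observation is that the individual shifts $j_{q_i'}$ cancel: the factor $e^{j_{q_i'}}$ acquired when expressing $[q_i',u_q\# w_i]$ over $[q_i',u_{q_i'}]$ is exactly undone by the $e^{-j_{q_i'}}$ built into $v_{q_i'}$, so the net coefficient on each $v_{q_i'}$ depends only on $j_q$ and the parity of $\mu(u_q)$, not on $i$. There are no analytic issues—everything follows formally from Proposition 3.2, the grading conventions, and $\nov$-linearity of $\partial'$.
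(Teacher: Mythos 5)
Your proof is correct and follows essentially the same route as the paper: both arguments reduce the comparison to matching the $\Gamma$-exponent of $[q_i',u_q\#w_i]$ against $[q_i',u_{q_i'}]$ via the grading (equivalently, via the action, since monotonicity ties the two together), and then observe that the exponents $j_{q_i'}$ cancel against the normalization built into $v_{q_i'}$, leaving only the factor $e^0$ or $e^1$ depending on the parity of the number of $1$'s in $q$. The paper phrases this by directly checking $gr$ and $\mathcal{A}$ of $[q_j,u_{q_i}'\#u_{ij}]$ against $v_{q_j}$ or $e\cdot v_{q_j}$; you phrase it by explicitly solving for the exponent $m_i$ from the grading identity $gr(e^m\cdot p)=gr(p)-2m$ and then substituting. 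The bookkeeping is the same.
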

\begin{proof}
Consider the first statement.
Let $q_i$ be a point of $\rp^{k}\cap T^{k}$ with an even number of homogeneous coordinates equal to $1$, so $gr(v_{q_i})=0$.
If $q_j$ is a point with an isolated Floer trajectory from $q_i$ to $q_j$, then the number of homogeneous coordinates of $q_j$ equal to $1$ must be odd.
Therefore $gr(v_{q_j})=-1$.
Let $u_{ij}$ denote the unique isolated Floer trajectory from $q_i$ to $q_j$.
Then 
$$gr([q_i,u_{q_i}'\#u_{ij}])=0-1=gr([q_j,u_{q_j}'])$$ 
and 
$$\mathcal{A}([q_i,u_{q_i}'\#u_{ij}])=0-c=\mathcal{A}([q_j,u_{q_j}']).$$
Therefore $[q_j,u_{q_i}'\#u_{ij}]=[q_j,u_{q_j}']$.
Moreover, by the grading of $v_{q_i}$ and $v_{q_j}$, any holomorphic strip $u$ from $q_i$ to $q_j$ that satisfies $[q_i,u_{q_i}'\#u]=[q_j,u_{q_j}']$ must have Maslov index 1, and hence by the uniqueness of $u_{ij}$ it follows that $u=u_{ij}$.
Thus 
$$n'(v_{q_i},v_{q_j})=n(q_i,q_j)=1.$$
If $q_j$ is a point with no isolated Floer trajectories from $q_i$ to $q_j$ then, by definition, $n'(v_{q_i},v_{q_j})=0=n(q_i,q_j)$.
Therefore we have
\begin{displaymath}
\left[
\begin{array}{c}
n'(v_{q_i},v_{q_0})\\
\vdots\\
n'(v_{q_i},v_{q_{2n}})\\
\end{array}
\right]
=
\left[
\begin{array}{c}
n(q_i,q_0)\\
\vdots\\
n(q_i,q_{2n})\\
\end{array}
\right].
\end{displaymath}
That is, the column of $\partial'$ corresponding to $v_{q_i}$ is the same as the column of $\partial$ corresponding to $q_i$.

Now consider the second statement.
Let $q_i$ be a point of with an odd number of homogeneous coordinates equal to $1$, so $gr(v_{q_i})=-1$.
If $q_j$ is a point with an isolated Floer trajectory from $q_i$ to $q_j$, then the number of homogeneous coordinates of $q_j$ equal to 1 must be even.
Therefore $gr(v_{q_j})=0$, and hence $gr(e\cdot v_{q_j})=-2$.
Let $u_{ij}$ denote the unique isolated Floer trajectory from $q_i$ to $q_j$.
Reasoning the same way as above, we have $[q_j,u_{q_i}'\#u_{ij}]=e\cdot [q_j,u_{q_j}']$ and thus $n'(v_{q_i},v_{q_j})=e$.
Points $q_j$ with no connecting isolated Floer trajectory have $n'(v_{q_i},v_{q_j})=0$.
Thus the column for $\partial'$ is $e$ times the column for $\partial$.
\end{proof}

\begin{cor}
$$HF(\rp^{2n-1},T^{2n-1}:\nov)=(\nov)^{2^n}.$$
\end{cor}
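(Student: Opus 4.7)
The plan is to deduce the Novikov homology directly from Lemma 6.5 together with the computation of $HF(\rp^{2n-1},T^{2n-1})$ already carried out in Section 4. The key observation is that $\nov$ is a field and $e$ is a unit, so the column-rescaling described in Lemma 6.5 preserves rank.

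First I would rephrase Lemma 6.5 matrix-theoretically: with respect to the ordered basis $(v_{q_0},v_{q_1},\dots,v_{q_{2^k}})$, the matrix of $\partial'$ factors as $M_{\partial'}=M_{\partial}\cdot D$, where $M_\partial$ is the (same) matrix of $\partial:CF(\rp^k,T^k)\to CF(\rp^k,T^k)$ from Section 3 viewed with entries in $\z_2\subset\nov$, and $D$ is the diagonal matrix whose entry at $q$ equals $1$ if $q$ has an even number of $1$'s and equals $e$ otherwise. Since $e\in\nov^{\times}$, the matrix $D$ is invertible over $\nov$, so
$$\Ima(\partial')=\Ima(M_\partial\cdot D)=\Ima(M_\partial),$$
and in particular $\dim_{\nov}\Ima(\partial')=\dim_{\nov}\Ima(M_\partial)$.

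Next I would note that since the entries of $M_\partial$ lie in $\z_2$ and $\nov$ is a field extension of $\z_2$, the rank of $M_\partial$ is the same whether computed over $\z_2$ or over $\nov$ (rank is preserved under base change of fields, and can be read off from row-reduction using only $\z_2$ operations). Combined with the previous step, this gives $\dim_{\nov}\Ima(\partial')=\dim_{\z_2}\Ima(\partial)$.

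Finally I would feed in the output of Section 4. There we showed $\dim_{\z_2}HF(\rp^{2n-1},T^{2n-1})=2^n$, and because $\partial^2=0$ implies $\dim HF=\dim CF-2\dim \Ima(\partial)$, this forces $\dim_{\z_2}\Ima(\partial)=2^{2n-2}-2^{n-1}$. The analogous identity holds over $\nov$ since $(\partial')^2=0$ and $\nov$ is a field, so
$$\dim_{\nov}HF(\rp^{2n-1},T^{2n-1}:\nov)=\dim_{\nov}CF-2\dim_{\nov}\Ima(\partial')=2^{2n-1}-2(2^{2n-2}-2^{n-1})=2^{n},$$
which yields the corollary. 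There is no real obstacle here beyond bookkeeping; the only point that needs care is the rank-preservation argument in the previous paragraph, which is what lets the Section 4 $\z_2$-computation be reused verbatim over $\nov$.
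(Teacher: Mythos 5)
Your proof is correct and follows essentially the same route as the paper: use Lemma 6.5 to write $\partial'$ as the $\z_2$-matrix $M_\partial$ composed with an invertible diagonal matrix over $\nov$, observe that this preserves rank and that rank is unchanged under base change from $\z_2$ to the field $\nov$, and then feed in the $\z_2$-computation of Section 4 via the identity $\dim HF=\dim CF-2\dim\Ima(\partial)$. Your version just makes the arithmetic and the column-rescaling explicit.
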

\begin{proof}
$\nov$ is a field and includes $\z_2=\z_2\cdot e^0\subset \nov$ as a subfield.
Therefore the previous lemma implies that there is an invertible diagonal matrix $M$ such that, under the identification
$$CF(\rp^n,T^n)\otimes_{\z_2}\nov=CF(\rp^n,T^n:\nov)$$
given by the chosen basis, we have $\partial\otimes1=M\circ\partial'$.
The dimension of the image of $M\circ\partial'$ is the same as $\partial'$ because $M$ is invertible.
The dimension of the image of $\partial\otimes1$ over $\nov$ is the same as the dimension of the image of $\partial$ over $\z_2$.
The result now follows from Theorem 1.1.
\end{proof}

Recall that $HF(\rp^{2n-1},T^{2n-1}:\unov)$ is defined to be the homology of the complex
$$\partial'\otimes1:C_*(\mathfrak c)\otimes_{\nov}\unov\to C_*(\mathfrak c)\otimes_{\nov}\unov$$
with $C_*(\mathfrak c)=CF(\rp^{2n-1},T^{2n-1}:\nov)$.
Therefore the previous corollary and the fact that $\nov$ is a field implies that 
$$HF(\rp^{2n-1},T^{2n-1}:\unov)=(\unov)^{2^n},$$
and Theorem 6.1 is proved.

\section{Volume Minimization of $T^{2n-1}$}
In this section we briefly discuss the problem of minimizing the volume of $\phi(T^k)$ for $\phi$ a Hamiltonian diffeomorphism.
In \cite{oh3} it is proved that $T^k$ is locally volume minimizing, that is it has minimal volume among all small Hamiltonian deformations.
However, it is unknown if it is globally volume minimizing.

One approach to gain information about this problem is to use the following Crofton type formula.
\begin{thm}[\cite{van} Proposition 2.10]
Let $L$ be a Lagrangian submanifold in $\cp^k$.
Then 
$$vol(L)=c_k\int_{U(k+1)/O(k+1)}\#(L\cap g\cdot \rp^k)dg,$$
where $c_k$ is a constant that does not depend upon $L$.
\end{thm}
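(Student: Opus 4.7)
The plan is to establish the identity by the standard double-fibration argument that underlies all Crofton-type formulas. Set $G=U(k+1)/O(k+1)$, equipped with its $U(k+1)$-invariant measure $dg$, and form the incidence variety
$$\mathcal I=\{\,(x,g)\in L\times G : x\in g\cdot\rp^k\,\}\subset L\times G.$$
For generic $L$ this is a smooth submanifold of dimension $\dim G$, with the two natural projections $\pi_L:\mathcal I\to L$ and $\pi_G:\mathcal I\to G$. The fibers of $\pi_G$ are finite for generic $g$, with cardinality $\#(L\cap g\cdot\rp^k)$, while the fibers of $\pi_L$ over $x\in L$ are $\{\,g\in G:x\in g\cdot\rp^k\,\}$, and by transitivity of the $U(k+1)$-action on $\cp^k$ these are all diffeomorphic to one another.

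Applying the coarea formula to each projection expresses $\mathrm{vol}(\mathcal I)$ in two ways, giving an identity of the shape
$$\int_G\#(L\cap g\cdot\rp^k)\,dg=\int_L f_L(x)\,d\mathrm{vol}_L(x),$$
where $f_L(x)$ is the integral over the fiber $\pi_L^{-1}(x)$ of the Jacobian ratio between the two projections. The crux of the argument is to show that $f_L(x)$ is a constant depending only on $k$. This reduces to a linear-symplectic computation at each intersection point: the Jacobian is controlled entirely by the relative position of the two Lagrangian subspaces $T_xL$ and $T_x(g\cdot\rp^k)$ inside the Hermitian space $T_x\cp^k$. Choosing a unitary frame adapted to $T_xL$---which is possible precisely because $L$ is Lagrangian---one identifies the fiber integral with the integral over $O(k+1)\backslash U(k+1)$ of a universal determinant, manifestly independent of $L$ and of $x$.

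The main obstacle is exactly this linear-algebraic identification: one must verify that the pointwise Jacobian depends only on the pair of transverse Lagrangian tangent planes and not on any other data of the embedding $L\hookrightarrow\cp^k$, so that $f_L(x)$ evaluates to one and the same constant for every Lagrangian $L$ and every $x\in L$. Once this is in hand, the constant $c_k$ (renormalized to match the statement) can either be kept abstract or pinned down explicitly by testing the formula against $L=\rp^k$ itself, where both sides can be computed directly from the fact that generic translates $g\cdot\rp^k$ meet $\rp^k$ in a known finite number of points. Combining the coarea identity with the constancy of $f_L$ then produces the desired Crofton formula.
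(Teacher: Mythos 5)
The paper does not prove this theorem at all; it is quoted verbatim from L.~H.~Van's paper \cite{van} (Proposition 2.10), so there is no ``paper's own proof'' to compare your attempt against. On its own terms, your sketch correctly identifies the standard double-fibration/coarea mechanism that underlies all Crofton-type formulas, and it correctly locates the crux: one must show that the fiber integral of the Jacobian ratio over $\pi_L^{-1}(x)$ is a constant $c_k^{-1}$ independent of the Lagrangian $L$ and of $x$.

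Two points in the sketch deserve sharpening. First, the observation that the fibers of $\pi_L$ are ``all diffeomorphic to one another'' by transitivity on the base is true but not by itself sufficient: what must be constant is the weighted fiber integral, not the fiber's diffeomorphism type. Your next paragraph does address this, but the burden is really there, not in the diffeomorphism statement. Second, the claim that the constancy follows from being able to ``choose a unitary frame adapted to $T_xL$'' is the right intuition, but it conceals the actual mechanism: the stabilizer of $x$ in $U(k+1)$ acts on the Lagrangian Grassmannian of $T_x\cp^k$ transitively, so as $g$ ranges over $\pi_L^{-1}(x)$ the tangent plane $T_x(g\cdot\rp^k)$ sweeps out all Lagrangian planes uniformly with respect to the invariant measure. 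The Jacobian is a function of the pair $(T_xL,\,T_x(g\cdot\rp^k))$ only, and since $U(k)$ acts transitively on Lagrangian planes one can normalize $T_xL$ to the standard real form by a change of variables, after which the fiber integral becomes a manifestly universal constant. That is the missing step between your two paragraphs, and it is exactly where the Lagrangian hypothesis enters in an essential way (for a non-Lagrangian $L$, $T_xL$ cannot be normalized by a unitary and the fiber integral will genuinely depend on $T_xL$'s position relative to the complex structure). With that gap filled, your outline is a correct account of the standard proof.
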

Generically $g\cdot\rp^k$ and $\rp^k$ intersect in $k+1$ points.
Therefore the theorem implies that for any Lagrangian $L$ we have
\begin{displaymath}
\frac{\textrm{vol}(L)}{\textrm{vol}{(\rp^k)}}\geq \frac{\mathrm{min}_g\#(L\cap g\cdot\rp^k)}{k+1}.
\end{displaymath}
Theorem 1.2 implies that $\#(\phi(T^{2n-1})\cap g\cdot\rp^{2n-1})\geq 2^n$, and therefore
\begin{displaymath}
\frac{\textrm{vol}(\phi(T^{2n-1})}{\textrm{vol}{(\rp^{2n-1}})}\geq \frac{2^n}{2n}.
\end{displaymath}
This proves Corollary 1.3

We want to compare this estimate to the volumes of $\rp^{2n-1}$ and $T^{2n-1}$.
$S^1$ acts on the unit sphere $S^{4n-1}\subset \mathbb C^{2n}$ and
$$\mathbb CP^{2n-1}=S^{4n-1}/S^1.$$
The symplectic form on $\cp^{2n-1}$ is the one obtained by symplectic reduction from the standard symplectic form on $\mathbb C^{2n}$.
Moreover, the projection $S^{4n-1}\to\mathbb CP^{2n-1}$ is an isometric submersion, that is it is an isometry on the orthogonal complement of the kernel of $TS^{4n-1}\to T\mathbb CP^{2n-1}$.
Using this fact, it follows that the volume of $\rp^{2n-1}$ is half the volume of the unit sphere $S^{2n-1}$.
Thus
$$\mathrm{vol}(\rp^{2n-1})=\frac{\pi^n}{(n-1)!}.$$
Moreover, the Clifford torus is the quotient of the torus $(S^1({1/\sqrt{2n}}))^{2n}\subset S^{4n-1}$ by $S^1$.
Thus
$$\mathrm{vol}(T^{2n-1})=\frac{1}{2\pi}\Biggl(\frac{2\pi}{\sqrt{2n}}\Biggr)^{2n}.$$
Therefore
$$\frac{\mathrm{vol}(T^{2n-1})}{\mathrm{vol}{(\rp^{2n-1})}}=\frac{(2\pi)^{n-1}(n-1)!}{n^n}.$$

For $n=1,2$ we thus get:
\begin{displaymath}
\begin{array}{ccccccc}
\frac{\textrm{vol}(T^1)}{\textrm{vol}(\rp^1)} &
= &
1
&
,
&
\frac{\textrm{vol}(\phi(T^1))}{\textrm{vol}(\rp^1)} &
\geq 
&
1, \\
\\
\frac{\textrm{vol}(T^3)}{\textrm{vol}(\rp^3)} &
= &
\frac{\pi}{2}\approx 1.57
&
,
&
\frac{\textrm{vol}(\phi(T^3))}{\textrm{vol}(\rp^3)} &
\geq 
&
1. 
\end{array}
\end{displaymath}
We recover the well-known fact that $S^1$ is volume minimizing in $\cp^1$, but for higher dimensions the comparison leaves the problem unanswered.


\end{document}